\documentclass[11pt]{article}

\usepackage{amsfonts, amsmath, amssymb, amsthm, authblk, hyperref}
\usepackage[usenames]{color}
\usepackage[top=1cm, bottom=2cm, left=2cm, right=2cm]{geometry}

\title{\huge{Determination of elliptic curves by their adjoint $p$-adic $L$-functions}}
\author[$\dagger$]{\Large Maria M. Nastasescu}
\affil[$\dagger$]{\normalsize Department of Mathematics, California Institute of Technology, MC 253-37, Pasadena, CA 91125, USA}
\date{}

\newtheorem{thm}{Theorem}
\newtheorem*{thm*}{Theorem}
\newtheorem{cor}{Corollary}
\newtheorem*{cor*}{Corollary}
\newtheorem{lem}{Lemma}

\newenvironment{remark}[1][Remark]{\begin{trivlist}
\item[\hskip \labelsep {\bfseries #1}]}{\end{trivlist}}

\newcommand\ba{\begin{eqnarray}}
\newcommand\ea{\end{eqnarray}}
\newcommand\nn{\nonumber}

\numberwithin{equation}{section}

\begin{document}
\maketitle

\begin{abstract}
Fix $p$ an odd prime. Let $E$ be an elliptic curve over $\mathbb{Q}$ with semistable reduction at $p$. We show that the adjoint $p$-adic $L$-function of $E$ evaluated at infinitely many integers prime to $p$ completely determines up to a quadratic twist the isogeny class of $E$. To do this, we prove a result on the determination of isobaric representations of $\text{GL}(3, \mathbb{A}_\mathbb{Q})$ by certain $L$-values of $p$-power twists.
\end{abstract}

\section{Introduction}

In this paper we will prove the following result concerning the $p$-adic $L$-function of the symmetric square of an elliptic curve over $\mathbb{Q}$, denoted $L_p (Sym^2 E, s)$ for $ s\in \mathbb{Z}_p$. More specifically, Theorem 1 gives a generalization of the result obtained in \cite{LuoRam} concerning $p$-adic $L$-functions of elliptic curves over $\mathbb{Q}$:

\begin{thm}
Let $p$ be an odd prime and $E, E'$ be elliptic curves over $\mathbb{Q}$ with semistable reduction at $p$. Suppose 
\begin{equation}
\label{condition}
L_p (Sym^2 E, n) = C L_p (Sym^2 E', n)
\end{equation}
for all integers $n$ prime to $p$ in an infinite set $Y$ and some constant $C \in \overline{\mathbb{Q}}$. Then $E'$ is isogenous to a quadratic twist $E_D$ of $E$. If $E, E'$ have square free conductors, then in fact $E \approx E'$ over $\mathbb{Q}$. 
\end{thm}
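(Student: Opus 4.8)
The plan is to convert the $p$-adic hypothesis (\ref{condition}) into an archimedean statement comparing critical values of $L(Sym^2 E\otimes\chi,\,\cdot\,)$ and $L(Sym^2 E'\otimes\chi,\,\cdot\,)$ over all Dirichlet characters $\chi$ of $p$-power conductor, to feed this into the determination theorem for isobaric representations of $\mathrm{GL}(3,\mathbb{A}_\mathbb{Q})$ announced in the abstract, and then to descend the conclusion from $\mathrm{GL}(3)$ back to the elliptic curves. For the first move, I would use that $s\mapsto L_p(Sym^2 E,s)$ is $p$-adic analytic on $\mathbb{Z}_p$ (it lies in the Iwasawa algebra, or a Tate algebra): since $Y\subset\mathbb{Z}$ is infinite it is bounded, hence accumulates in the compact set $\mathbb{Z}_p$, so the identity principle for $p$-adic analytic functions promotes (\ref{condition}) to the identity
\[
L_p(Sym^2 E,s)=C\,L_p(Sym^2 E',s)\qquad(s\in\mathbb{Z}_p),
\]
and, reading $L_p$ as a measure on a branch of weight space, the same equality after twisting by every finite-order character of $1+p\mathbb{Z}_p$.

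Next I would apply the defining interpolation property of the symmetric-square $p$-adic $L$-function (Coates--Schmidt, Hida): at a Dirichlet character $\chi$ of $p$-power conductor and a critical integer $k$, the identity just obtained reads
\[
\mathcal{E}_p(E,\chi,k)\,\frac{L(Sym^2 E\otimes\chi,k)}{\Omega_E}=C\,\mathcal{E}_p(E',\chi,k)\,\frac{L(Sym^2 E'\otimes\chi,k)}{\Omega_{E'}},
\]
where $\mathcal{E}_p$ is the modified Euler factor at $p$ and $\Omega$ a suitable period. Here semistability at $p$ enters: the local component of $\pi_E$ at $p$ is an unramified principal series or Steinberg, so $\mathcal{E}_p(E,\chi,k)$ is, for all but finitely many $\chi$, an explicit nonzero algebraic number built from Satake parameters and a Gauss sum of $\chi$. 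Dividing out the Euler factors and periods leaves
\[
L(Sym^2 E\otimes\chi,k)=C_\chi\,L(Sym^2 E'\otimes\chi,k)
\]
for all such $\chi$, with $C_\chi\in\overline{\mathbb{Q}}^{\times}$ explicit.

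Now let $\Pi=Sym^2\pi_E$ and $\Pi'=Sym^2\pi_{E'}$; by Gelbart--Jacquet these are self-dual isobaric automorphic representations of $\mathrm{GL}(3,\mathbb{A}_\mathbb{Q})$ with trivial central character, cuspidal when $E$ is non-CM and an isobaric sum of a quadratic Dirichlet character and a $\mathrm{GL}(2)$-piece when $E$ has complex multiplication. The equalities just derived, holding over the full family of twists of $p$-power conductor, are exactly the hypothesis of the $\mathrm{GL}(3)$ determination theorem proved here, which forces $\Pi\cong\Pi'$ (a priori up to a twist of $p$-power conductor, which standard properties of the symmetric-square lift together with the triviality of central characters show can be absorbed). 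Hence $\mathrm{Ad}\,\pi_E\cong\mathrm{Ad}\,\pi_{E'}$, and the classical fact that $\mathrm{Ad}\,\pi\cong\mathrm{Ad}\,\pi'$ implies $\pi'\cong\pi\otimes\eta$ for a necessarily quadratic idele class character $\eta$ gives $\pi_{E'}\cong\pi_E\otimes\eta$. By modularity and Faltings' isogeny theorem, $E'$ is then isogenous over $\mathbb{Q}$ to the quadratic twist $E_D$, where $D$ is the fundamental discriminant of $\eta$. For the last assertion, suppose $N_E$ and $N_{E'}$ are both square-free; since isogenous curves have equal conductors, $N_{E_D}=N_{E'}$ is square-free. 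But if $D\neq1$ then for each prime $\ell\mid D$ the twist $E_D$ acquires additive reduction at $\ell$ with conductor exponent $\geq2$: for odd $\ell$ this follows from the good or multiplicative reduction of $E$ at $\ell$ forced by $N_E$ square-free, and for $\ell=2$ the same holds when $2\mid D$. This contradicts square-freeness, so $D=1$ and $E'\approx E$ over $\mathbb{Q}$.

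I expect the main obstacle to be the $\mathrm{GL}(3)$ determination step. In \cite{LuoRam} and related work one may twist by all quadratic, or all Dirichlet, characters, whereas here the twists delivered by the $p$-adic $L$-function form the far thinner family of conductors $p^m$, $m\geq0$, so the non-vanishing and averaging arguments must be carried out over this single prime's worth of twists; one must also control the Euler factors $\mathcal{E}_p$ and the periods entering the interpolation, and handle the CM case where $\Pi$ is merely isobaric. By contrast the passage between archimedean and $p$-adic $L$-values, the descent from $Sym^2$ to $\mathrm{GL}(2)$, and the conductor computation of the final step are routine.
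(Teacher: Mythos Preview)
Your proposal is correct and tracks the paper's own proof closely: the rigidity step (Lemma~4 here), the interpolation formulas \eqref{at2} and \eqref{at2bad}, Theorem~2 on $\mathrm{GL}(3)$, the adjoint descent via Theorem~4.1.2 of \cite{Ram}, and Faltings. One technical point to tighten: in the supersingular case the measure is only $2$-admissible, so $L_p$ is of type $o(\log^2)$ rather than an element of the Iwasawa (or a Tate) algebra, and the naive identity principle does not apply; the paper instead uses Vi\v{s}ik's result that such a function with infinitely many zeros on a single circle $|u-1|=r$ must vanish, which is available because $(1+p)^n$ with $(n,p)=1$ all satisfy $|(1+p)^n-1|=1/p$. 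Also note that Theorem~2 already delivers $\Pi\cong\Pi'$ outright, so there is no residual $p$-power twist to absorb.
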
 

Suppose $E$ has good reduction at $p$. Following the definition in \cite{Dabro} of the $p$-adic $L$-function for the symmetric square of an elliptic curve $E$ over $\mathbb{Q}$, if $\chi: \mathbb{Z}_p^\times \to \mathbb{C}_p^\times$ is a wild $p$-adic character of conductor $p^{m_\chi}$, which can be identified with a primitive Dirichlet character, then 
\begin{equation}
\label{connection}
L_p (Sym^2 E, \chi) = \int_{\mathbb{Z}_p^\times} \chi \text{d}\mu_p = C_{E} \cdot \alpha_p^{-2m_\chi} \tau(\overline{\chi})^2 p^{m_\chi} L(Sym^2 E, \chi, 2)
\end{equation}
where $C_E$ is a constant that depends on $E$, $\tau(\chi)$ is the Gauss sum of $\chi$ and $\alpha_p$ is a root of the polynomial $X^2 - a_p X +p$, with $a_p = p+1 - \# E(\mathbb{F}_p)$. It is proved in \cite{Dabro} that if $E$ has good ordinary reduction at $p$ then $\mu_p$ is a bounded measure on $\mathbb{Z}_p^\times$, while if $E$ has good supersingular reduction at $p$ then $\mu_p$ is $h$-admissible (cf. \cite{Visik}) with $h=2$. 

Similarly, if $E$ has bad multiplicative reduction at $p$, then for a non-trivial even character as above we have 
\begin{equation}
\label{connection2}
L_p (Sym^2 E, \chi) = \int_{\mathbb{Z}_p^\times} \chi \text{d}\mu_p = C_E' \tau(\overline{\chi})^2 p^{m_\chi} L(Sym^2 E, \chi, 2),
\end{equation}
with $\mu_p$ bounded on $\mathbb{Z}_p^\times$. 

Set
\begin{equation*}
L_p (Sym^2 E, \chi, s) := L_p (Sym^2 E, \chi \cdot \langle x \rangle^s)
\end{equation*}
where $\langle \cdot \rangle : \mathbb{Z}_p^\times \to 1 + p \mathbb{Z}_p$, with $\langle x \rangle = \frac{x}{\omega (x)}$ and $\omega : \mathbb{Z}_p^\times \to \mathbb{Z}_p^\times$ the Teichm\"{u}ller character. 

Using the theory on $h$-admissible measures developed in \cite{Visik}, identity \eqref{condition} implies that 
\begin{equation*}
L_p( Sym^2 E, \chi, s) = C L_p (Sym^2 E', \chi, s)
\end{equation*}
holds for all $s \in \mathbb{Z}_p$ and $\chi$ a wild $p$-adic character. 

Let $f, f'$ be the newforms of weight 2 associated to $E$ and $E'$, and $\pi, \pi'$ the unitary cuspidal automorphic representations of $\text{GL}(2, \mathbb{A}_\mathbb{Q})$ generated by $f$ and $f'$ respectively. Then 
\begin{equation}
L(Sym^2 E, s) = L(Sym^2 \pi, s-1)
\end{equation}
where $Sym^2 \pi$ is the automorphic representation of $\text{GL}(3, \mathbb{A}_\mathbb{Q})$ associated to $\pi$ by Gelbart and Jacquet in \cite{GelJac}. 

Hence, using \eqref{connection}, Theorem 1 is a consequence of the following result on the determination of isobaric automorphic representations of $\text{GL}(3)$ over $\mathbb{Q}$, which will be proved in Section 4.
\begin{thm}
Suppose $\pi, \pi'$ are two isobaric sums of unitary cuspidal automorphic representations of $\text{GL}(3, \mathbb{A}_\mathbb{Q})$ with the same central character $\omega$. Let $X_{(p)}^w$ be the set of $p$-power order characters of conductor $p^a$ for some $a$. Suppose $L(\pi \otimes \chi, s)$ is entire for all $\chi \in X_{(p)}^w$, and that there exist constants $B,C \in \mathbb{C}$ such that
\begin{equation}
\label{cond_ww}
L(\pi \otimes \chi, \beta) = B^a C L(\pi' \otimes \chi, \beta)
\end{equation}
for some $1\geq \beta > \frac{2}{3}$ and for all $\chi \in X_{(p),a}^w$ primitive $p$-power order characters of conductor $p^a$ for all but a finite number of $a$. Then $\pi \cong \pi'$. Note that if $\pi, \pi'$ are isobaric sums of tempered unitary cuspidal automorphic representations then the same result holds if \eqref{cond_ww} is satisfied for some $1 \geq \beta > \frac{1}{2}$ (If the generalized Ramanujan conjecture is true then this condition is automatically satisfied). 
\end{thm}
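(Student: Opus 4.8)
The plan is to recover $\pi$ from the collection of twisted $L$-values by a Mellin-transform / Dirichlet-series argument, exploiting that $p$-power-order characters of $p$-power conductor are dense enough in the relevant group of characters of $\mathbb{Z}_p^\times$ to pin down the local components at $p$, and then to use a converse-type principle to conclude global isomorphism. First I would write out, for each $\chi \in X_{(p),a}^w$, the Euler product for $L(\pi \otimes \chi, s)$ and separate the Euler factor at $p$ (which for large $a$ is trivial, since $\chi$ is ramified) from the away-from-$p$ part. The hypothesis \eqref{cond_ww} at the single point $s=\beta$, holding for all but finitely many $a$ and all primitive $\chi$ of conductor $p^a$, should be leveraged as follows: fix $a$ large, sum \eqref{cond_ww} against $\overline{\chi}(m)$ over all primitive $\chi$ of conductor $p^a$, and use orthogonality of characters on $(\mathbb{Z}/p^a)^\times$ to extract, for each residue class $m \bmod p^a$ with $p \nmid m$, a linear relation among the Dirichlet coefficients $a_\pi(n)$ and $a_{\pi'}(n)$ with $n \equiv m$. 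The constant $B^a C$ is handled by noting $B$ gets absorbed into a shift; comparing two consecutive values of $a$ eliminates $C$ and identifies $B$ (or shows $B$ is forced to be $1$ together with a root-of-unity ambiguity coming from Gauss-sum factors in the functional equation).

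The analytic input I would use is convergence: because $\pi,\pi'$ are isobaric sums of unitary cuspidal representations of $\mathrm{GL}(3)$, the bound $a_\pi(n) \ll n^{1/2 + \epsilon}$ follows from Kim–Sarnak / Luo–Rudnick–Sarnak type bounds toward Ramanujan (and is exactly $n^\epsilon$ under Ramanujan, which is why the tempered case only needs $\beta > 1/2$), so the Dirichlet series $\sum a_\pi(n)\chi(n) n^{-\beta}$ converges absolutely precisely when $\beta > 2/3$ in general and when $\beta > 1/2$ in the tempered case — this is where the numerical thresholds in the statement come from. With absolute convergence in hand, the orthogonality extraction above is legitimate term-by-term, and letting $a \to \infty$ along the cofinite set of allowed $a$ I would deduce $a_\pi(n) = a_{\pi'}(n)$ for all $n$ prime to $p$, hence $L^{(p)}(\pi,s) = L^{(p)}(\pi',s)$ as Dirichlet series.

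From equality of the away-from-$p$ partial $L$-functions I would invoke strong multiplicity one for isobaric representations (Jacquet–Shalika, and the classification of isobaric sums): two isobaric automorphic representations of $\mathrm{GL}(n,\mathbb{A}_\mathbb{Q})$ agreeing at almost all places are isomorphic. Since they agree at every place away from $p$, we get $\pi \cong \pi'$ outside $p$; the common central character $\omega$ then forces agreement of the local components at $p$ as well (the local central character plus the Satake/Langlands data at all other places determines the $p$-component for an isobaric sum), giving $\pi \cong \pi'$ globally.

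The main obstacle I expect is the careful bookkeeping in the orthogonality step: the functional equation for $L(\pi \otimes \chi, s)$ introduces a factor involving $\tau(\chi)^3$ (or a power depending on $\mathrm{GL}(3)$) and the conductor $p^{3a}$, and one must check that the hypothesized shape $B^a C$ of the ratio is compatible with — and in fact controlled by — these archimedean and Gauss-sum factors, so that after dividing out one genuinely isolates the Dirichlet coefficients rather than a twisted version of them. A secondary subtlety is that \eqref{cond_ww} is assumed only at the \emph{single} point $s = \beta$, not on a line, so I cannot directly compare Dirichlet series; the resolution is that varying $\chi$ over the infinitely many characters of $p$-power conductor already supplies enough independent equations, via the orthogonality relations, to reconstruct each coefficient $a_\pi(n)$ from the single-point data — provided $\beta$ lies in the range of absolute convergence, which is exactly the hypothesis imposed.
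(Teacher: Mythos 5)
There is a genuine gap at the heart of your argument: the claim that $\sum_n a_\pi(n)\chi(n)n^{-\beta}$ converges absolutely for $\beta>\frac{2}{3}$ (or $\beta>\frac{1}{2}$ in the tempered case) is false, and with it the entire orthogonality extraction collapses. For an isobaric sum of unitary cuspidal representations of $\mathrm{GL}(3)$ the Dirichlet series converges absolutely only for $\mathrm{Re}(s)>1$ --- even under the Ramanujan conjecture, where $a_\pi(n)\ll n^{\epsilon}$, one still needs $\mathrm{Re}(s)>1$; bounds toward Ramanujan of Luo--Rudnick--Sarnak/Kim--Sarnak type only make matters worse, not better. So at the single point $s=\beta\leq 1$ you cannot expand $L(\pi\otimes\chi,\beta)$ as a convergent series and sum it against $\overline{\chi}(m)$ term by term. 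The thresholds $\frac{2}{3}=\frac{n-1}{n}$ and $\frac{1}{2}=\frac{n-1}{n+1}$ in the statement do not come from absolute convergence at all: in the paper they arise from an approximate functional equation for $L(\pi\otimes\chi,\beta)$ (verified for isobaric $\pi$ precisely under the entireness hypothesis, which your approach never actually uses --- a sign something is off), which expresses the value as a rapidly truncated sum of length roughly $x$ plus a dual sum of length roughly $y$ weighted by $\tau(\chi)^3$, with $xy=p^{3a}$. After averaging over the sparse family of primitive wild characters of conductor $p^a$, the first sum is handled by orthogonality (as you envisage), but the dual sum requires genuine cancellation in the character-averaged Gauss-sum factors, which the paper obtains from bounds on hyper-Kloosterman sums (Lemmas 1 and 2); balancing the two contributions is exactly what produces the exponents $\frac{n-1}{n}$ and $\frac{n-1}{n+1}$. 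Your proposal contains no substitute for this analytic core, so the step ``letting $a\to\infty$ I deduce $a_\pi(n)=a_{\pi'}(n)$'' is unsupported.

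Two smaller remarks. First, once one does obtain (as the paper's Theorem 4 gives) $a_\pi(n)=B^aCa_{\pi'}(n)$ for all $n$ prime to $p$, the normalization $a_\pi(1)=a_{\pi'}(1)=1$ immediately forces $B=C=1$; your scheme of comparing consecutive $a$ is workable but more complicated than needed. Second, for $\mathrm{GL}(3)$ knowing only the prime-indexed coefficients $a_\pi(l)$ and the central character does not trivially determine the local components: one must also recover the second elementary symmetric function of the Satake parameters. The paper does this via a unitarity lemma showing that at unramified places the parameters are either $\{ul^{t},ul^{-t},w\}$ or all of modulus one, whence $\alpha_l\beta_l+\alpha_l\gamma_l+\beta_l\gamma_l=(\alpha_l+\beta_l+\gamma_l)/(\alpha_l\beta_l\gamma_l)$; if instead you recover all coefficients $a_\pi(n)$ with $(n,p)=1$ (including prime powers) this issue disappears, but you should say which route you take before invoking strong multiplicity one for isobaric representations. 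Your concluding remark that the central character ``forces agreement at $p$'' is unnecessary: Jacquet--Shalika's theorem already yields $\pi\cong\pi'$ from agreement at almost all places.
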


Note that in \cite{Munshi}, a result was proved concerning the determination of $\text{GL}(3)$ forms by twists of characters of almost prime modulus of the central $L$-values. In our case, we twist over a more sparse set of characters. 

$Sym^2 \pi$ is cuspidal iff $E$ is non-CM. Hence, if in Theorem 1 $E,E'$ are two non-CM elliptic curves, then it is enough to prove Theorem 2 for $\pi, \pi'$ cuspidal automorphic representations of $\text{GL}(3, \mathbb{A}_\mathbb{Q})$. 

If $E, E'$ have complex multiplication, let $\eta, \eta'$ be the associated (unitary) idele class characters over the imaginary quadratic number fields $K, K'$ and let $\pi, \pi'$ be the cuspidal automorphic representations of $\text{GL}(2, \mathbb{A}_\mathbb{Q})$ automorphically induced by $\eta$ and $\eta'$ respectively. Then $\pi, \pi'$ are dihedral and moreover
\begin{equation*}
Sym^2(\pi) \cong I_K^{\mathbb{Q}} (\eta^2) \boxplus \eta_0
\end{equation*}
with $\eta_0$ the restriction of $\eta$ to $\mathbb{Q}$, and similarly for $Sym^2(\pi')$. Here $\boxplus$ denotes the isobaric sum (see Section 2). Twisting by a character $\chi \neq \eta_0^{-1}$, since $L(Sym^2(\pi) \otimes \chi, s)$ is an entire function, it follows that Theorem 1 is a consequence of Theorem 2. Note that if $K=K'$ then $\eta_0 = \eta_0'$ and Theorem 2 is a direct consequence of Theorem A in \cite{LuoRam}.

Using Theorem 4.1.2 in \cite{Ram}, the following is a consequence of Theorem 2:
\begin{thm}
Suppose $\pi, \pi'$ are two unitary cuspidal automorphic representations of $\text{GL}(2, \mathbb{A}_\mathbb{Q})$ with the same central character $\omega$. Suppose there exist constants $B,C \in \mathbb{C}$ such that
\begin{equation}
\label{adjoint_cw}
L(Ad(\pi) \otimes \chi, \beta) = B^a C L(Ad(\pi') \otimes \chi, \beta)
\end{equation}
for some $1 \geq \beta > \frac{2}{3}$ and for all $\chi \in X_{(p),a}^w$ primitive $p$-power order characters of conductor $p^a$ for all but a finite number of $a$. Then there exists quadratic character $\nu$ such that $\pi \cong \pi' \otimes \nu$. If $\pi, \pi'$ are tempered then the same result holds if $\eqref{adjoint_cw}$ is true for some $1 \geq \beta > \frac{1}{2}$. 
\end{thm}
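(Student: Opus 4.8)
\noindent\emph{Proof strategy.} The plan is to deduce Theorem 3 from Theorem 2, applied to the adjoint lifts $Ad(\pi)$ and $Ad(\pi')$, together with the rigidity of the adjoint lift (Theorem 4.1.2 of \cite{Ram}). Recall that for a unitary cuspidal representation $\pi$ of $\mathrm{GL}(2,\mathbb{A}_\mathbb{Q})$ with central character $\omega$ one has $Ad(\pi) = Sym^2(\pi)\otimes\omega^{-1}$, which by Gelbart--Jacquet \cite{GelJac} is an isobaric automorphic representation of $\mathrm{GL}(3,\mathbb{A}_\mathbb{Q})$: it is cuspidal when $\pi$ is non-dihedral, and when $\pi = I_K^{\mathbb{Q}}(\eta)$ is dihedral, unwinding the formula $Sym^2(\pi) \cong I_K^{\mathbb{Q}}(\eta^2)\boxplus\eta_0$ gives $Ad(\pi) \cong I_K^{\mathbb{Q}}(\eta/\eta^\sigma) \boxplus \omega_{K/\mathbb{Q}}$, the isobaric sum of a cuspidal representation of $\mathrm{GL}(2,\mathbb{A}_\mathbb{Q})$ and the quadratic idele class character attached to $K$. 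In every case $\Pi := Ad(\pi)$ and $\Pi' := Ad(\pi')$ are isobaric sums of unitary cuspidal automorphic representations of $\mathrm{GL}(3,\mathbb{A}_\mathbb{Q})$, and because the adjoint map $\mathrm{PGL}_2 \to \mathrm{SO}_3 \subset \mathrm{SL}_3$ has trivial determinant, both $\Pi$ and $\Pi'$ have trivial central character; in particular they have the same central character.

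Next I would check the remaining hypotheses of Theorem 2 for the pair $\Pi,\Pi'$. The entireness of $L(\Pi\otimes\chi,s)$ for all $\chi\in X_{(p)}^w$ is clear when $\pi$ is non-dihedral, since then $\Pi$ is cuspidal on $\mathrm{GL}(3)$. When $\pi$ is dihedral one has the factorization $L(\Pi\otimes\chi,s) = L(I_K^{\mathbb{Q}}(\eta/\eta^\sigma)\otimes\chi,s)\, L(\omega_{K/\mathbb{Q}}\chi,s)$; the first factor is a twisted cuspidal $\mathrm{GL}(2)$ $L$-function and hence entire, and the Hecke $L$-function $L(\omega_{K/\mathbb{Q}}\chi,s)$ is entire unless $\omega_{K/\mathbb{Q}}\chi$ is trivial --- impossible, since $\chi$ has $p$-power order with $p$ odd while $\omega_{K/\mathbb{Q}}$ is a nontrivial quadratic character. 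As for the $L$-value identity, hypothesis \eqref{adjoint_cw} is literally hypothesis \eqref{cond_ww} for the pair $\Pi,\Pi'$ with the same $\beta$, $B$, $C$; and if $\pi,\pi'$ are tempered then so are $Ad(\pi),Ad(\pi')$, which is precisely what permits the weaker bound $\beta>\tfrac12$. Theorem 2 then yields $Ad(\pi)\cong Ad(\pi')$.

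Finally I would invoke Theorem 4.1.2 of \cite{Ram}, according to which $Ad(\pi)\cong Ad(\pi')$ forces $\pi'\cong\pi\otimes\mu$ for some idele class character $\mu$ of $\mathbb{Q}$. Comparing central characters, $\omega = \omega_{\pi'} = \omega_\pi\mu^2 = \omega\mu^2$, so $\mu^2 = 1$; thus $\nu := \mu$ is a (possibly trivial) quadratic character, and since $\nu = \nu^{-1}$ we obtain $\pi\cong\pi'\otimes\nu$, as desired. Essentially all of the analytic work is already contained in Theorem 2; the only step here that is not pure bookkeeping is the verification that $Ad(\pi)$ lies within the hypotheses of Theorem 2, and within that the one delicate point is the entireness of the twisted $L$-function in the dihedral case, which is exactly where the assumption that $p$ is odd enters.
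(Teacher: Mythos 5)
Your proposal follows the paper's own route exactly: apply Theorem 2 to $Ad(\pi)$ and $Ad(\pi')$ to conclude $Ad(\pi)\cong Ad(\pi')$, then invoke Theorem 4.1.2 of \cite{Ram} together with the equality of the central characters to produce the quadratic character $\nu$ with $\pi\cong\pi'\otimes\nu$. The paper's proof consists of precisely these two steps; your additional verifications (that the adjoints have the same central character, that the twisted $L$-functions are entire in the dihedral case, and that temperedness passes to $Ad$) are details the paper leaves implicit, and they are sound.
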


In Section 3, we will prove the following result on isobaric sums of unitary cuspidal automorphic representations of $\text{GL}(n, \mathbb{A}_\mathbb{Q})$ for $n\geq 3$: 

\begin{thm}
Let $\pi$ be an isobaric sum of unitary cuspidal automorphic representations of $\text{GL}(n, \mathbb{A}_\mathbb{Q})$ with $n\geq 3$ and $s,r$ be integers relatively prime to $p$. If $L(\pi \otimes \chi, s)$ is entire for all $\chi$ $p$-power order characters of conductor $p^a$ for some $a$, then
\begin{equation}
\lim_{a \to \infty} p^{-a} \sum\nolimits_{\chi \text{ mod } p^a}^* \overline{\chi}(s) \chi(r) L(\pi \otimes \chi, \beta) = \frac{1}{p} \left( 1 - \frac{1}{p} \right) \frac{a_\pi (s/r)}{(s/r)^\beta} 
\end{equation}
where $\sum^*$ denotes the sum over primitive $p$-power order characters of conductor $p^a$ and $1\geq \beta > \frac{n-1}{n+1}$ if $\pi$ is an isobaric sum of tempered unitary cuspidal automorphic representations and $1\geq \beta > \frac{n-1}{n}$ in general. 
\end{thm}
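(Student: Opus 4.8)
The plan is to expand the twisted $L$-function into its Dirichlet series and interchange the (finite) character sum with the series, exploiting the orthogonality relations for primitive characters modulo $p^a$. Write $L(\pi \otimes \chi, \beta) = \sum_{m \geq 1} a_\pi(m) \chi(m) m^{-\beta}$ for $\mathrm{Re}(s) = \beta$ large enough, and more carefully use an approximate functional equation so that the sum is effectively supported on $m \ll (p^a)^{n/2 + \epsilon}$ (this is where the conductor $p^a$ of $\chi \otimes \pi$, of size roughly $(p^a)^n$, enters). Applying $p^{-a}\sum^*_{\chi \bmod p^a} \overline{\chi}(s)\chi(r)$ and using that $\sum^*_{\chi \bmod p^a} \chi(t)$ picks out $t \equiv 1 \pmod{p^a}$ up to the standard inclusion–exclusion correction between level $p^a$ and level $p^{a-1}$, the only surviving term in the limit is $m \equiv r/s \pmod{p^\infty}$ with $m$ as small as possible, namely $m = s/r$ when $s/r$ is a genuine ($p$-integral) integer, giving the main term $\frac{1}{p}(1 - \frac{1}{p}) a_\pi(s/r)(s/r)^{-\beta}$; all other congruence solutions $m = s/r + p^a k$ contribute error terms.

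The key steps, in order, are: \textbf{(i)} set up the approximate functional equation for $L(\pi \otimes \chi, \beta)$, recording that the analytic conductor is $Q \asymp (p^a)^n$ (using that $\chi$ is primitive of conductor $p^a$ and ramified, and that $L(\pi\otimes\chi,s)$ is entire by hypothesis) so the "dual" sum has length $\sim Q^{1/2} = (p^a)^{n/2}$; \textbf{(ii)} insert this into $p^{-a}\sum^*_{\chi}\overline{\chi}(s)\chi(r)$, split $\sum^* = \sum_{\bmod p^a} - \sum_{\bmod p^{a-1}}$, and apply orthogonality to reduce each inner sum to a sum over $m$ in a fixed residue class mod $p^a$ (resp. $p^{a-1}$); \textbf{(iii)} isolate the term $m = s/r$ (legitimate since $\gcd(sr,p)=1$, so $s/r \in \mathbb{Z}_{(p)}^\times$ and it solves the congruence for all $a$ once $a$ is large) as the main term, matching the claimed constant $\frac{1}{p}(1-\frac{1}{p})$ from the two-level difference; \textbf{(iv)} bound the tail: the remaining $m$ run over $\{s/r + p^a k : k \neq 0\} \cap [1, (p^a)^{n/2+\epsilon}]$, so there are $O((p^a)^{n/2 - 1 + \epsilon})$ of them, and with the trivial (Rankin–Selberg, or Ramanujan-on-average) bound $|a_\pi(m)| \ll m^{\theta + \epsilon}$ — where $\theta = (n-1)/2$ in general from the convexity/Jacquet–Shalika bound and $\theta = \epsilon$ under temperedness — together with $m^{-\beta}$, one gets a total contribution $\ll (p^a)^{n/2 - 1 + (\theta-\beta)n/2 + \epsilon}$; \textbf{(v)} check this exponent is negative exactly when $\beta > (n-1)/n$ in general and when $\beta > (n-1)/(n+1)$ in the tempered case, so the tail vanishes as $a \to \infty$ and only the main term remains.

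The main obstacle I expect is step \textbf{(iv)}–\textbf{(v)}: getting the length of the dual sum and the pointwise/average bound on $a_\pi(m)$ precise enough that the error exponent actually crosses zero at the stated thresholds $(n-1)/(n+1)$ and $(n-1)/n$. This requires being careful that $\pi$ being merely \emph{isobaric} (not cuspidal) does not spoil the approximate functional equation — one handles this by working component-by-component in the isobaric decomposition $\pi = \pi_1 \boxplus \cdots \boxplus \pi_k$, noting $L(\pi\otimes\chi,s) = \prod_j L(\pi_j \otimes\chi,s)$ and $a_\pi = a_{\pi_1} * \cdots * a_{\pi_k}$, so the convexity bounds and the Jacquet–Shalika / Rankin–Selberg average bounds apply to each $\pi_j$ and combine with the right exponents (the worst case being a single cuspidal block of rank $n$, which is what the stated bound reflects). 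A secondary technical point is justifying the interchange of the limit with the character sum and the convergence of the $m$-sum at $s = \beta$ with $\beta \leq 1$, which is exactly why the approximate functional equation (rather than naive absolute convergence) is needed, and why the hypothesis that $L(\pi\otimes\chi,s)$ is entire is used. I would also verify the elementary computation that $p^{-a}\big(\#\{\chi \bmod p^a\} \cdot \mathbf{1} - \#\{\chi\bmod p^{a-1}\}\cdot\mathbf{1}\big)$ on the main term produces precisely $\frac{1}{p}(1 - \frac{1}{p})$, since the factor $a_\pi(s/r)(s/r)^{-\beta}$ is independent of $a$ and simply rides along.
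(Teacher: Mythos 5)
There are genuine gaps at the two places where the real work happens. First, the character average: the sum $\sum^*$ in the statement is over primitive characters of $p$-power \emph{order} (wild characters), a much sparser family than all primitive characters mod $p^a$. Orthogonality for this family does not pick out $m\equiv r/s \pmod{p^a}$; it gives $\sum^*_{\chi \bmod p^a}\chi = p^{a-1}\delta_{S_a}-p^{a-2}\delta_{S_{a-1}}$, where $S_a$ is the set of residues of exponent $p-1$ mod $p^a$ (Teichm\"uller elements). So after averaging, the diagonal sum runs over all $m$ with $rm\equiv bs \pmod{p^a}$, $b\in S_a$, and you must separately show that the solutions with $b\neq 1$ are negligible (the paper uses $b\gg p^{a/(p-1)}$ for $b\in S_a$, $b\neq1$). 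Your two-level inclusion--exclusion with \emph{all} primitive characters would also produce the wrong constant: $p^{-a}\bigl(\varphi(p^a)-\varphi(p^{a-1})\bigr)$ is not $\frac1p\bigl(1-\frac1p\bigr)$; the stated constant comes precisely from the wild-character counts $p^{-a}(p^{a-1}-p^{a-2})$.

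Second, and more seriously, your outline never treats the dual term of the approximate functional equation, which carries the factor $\tau(\chi)^n(f_\pi p^{an})^{-\beta}$. After averaging over wild $\chi$ one must bound $\sum^*_{\chi}\overline{\chi}(m s' r f_\pi')\tau(\chi)^n$, and this is where the paper invests most of its effort: in the tempered case via Ye's hyper-Kloosterman bound (Lemmas 1--2, giving $\ll p^{1/2+a(n+1)/2}$), and in the general case via Cauchy--Schwarz plus Poisson summation on the second moment of these Gauss-sum sums. Moreover the paper does not use a balanced equation of length $(p^a)^{n/2}$: it splits at $x=p^{an(1-\upsilon)}$, $y=p^{an\upsilon}$ and optimizes $\upsilon$ against the two error terms, using the Rankin--Selberg average bound $\sum_{m\le M}|a_\pi(m)|^2\ll M^{1+\epsilon}$ (valid for isobaric sums of unitary cuspidal representations) rather than pointwise bounds. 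Your step (iv)--(v) arithmetic does not actually deliver the stated thresholds: with the pointwise bound $|a_\pi(m)|\ll m^{(n-1)/2+\epsilon}$ your error exponent is positive for every $\beta\le 1$ once $n\ge3$, and even in the tempered case your exponent gives a cutoff near $(n-2)/n$, not $(n-1)/(n+1)$ --- unsurprisingly, since the binding constraint in the paper comes from the dual/Gauss-sum side you omitted. The hypothesis that $L(\pi\otimes\chi,s)$ is entire is likewise used in the contour-shift proof of the approximate functional equation for isobaric $\pi$, which the paper verifies directly rather than component-by-component.
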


This result generalizes Proposition 2.2 in \cite{LuoRam}. To prove Theorem 4, we will check that the approximate functional equation for $L(\pi \otimes \chi, \beta)$ in \cite{Luo} holds for isobaric automorphic representations of $\text{GL}(n, \mathbb{A}_\mathbb{Q})$ if $L(\pi \otimes \chi, s)$ is entire. Note that in \cite{Harcos}, a similar functional equation for the $L$-function associated to an isobaric automorphic representation at the center $\beta = \frac{1}{2}$ was proved. 

Theorem 4, together with the Generalized Strong Multiplicity One Theorem (see Section 2) proves Theorem 2. As a consequence of Theorem 4, the following non-vanishing result holds:
\begin{cor}
\label{corr}
Let $\pi$ be an isobaric sum of unitary cuspidal automorphic representations of $\text{GL}(n, \mathbb{A}_\mathbb{Q})$ with $n\geq 3$. There are infinitely many primitive $p$-power order characters $\chi$ of conductor $p^a$ for some $a$, such that if $L(\pi \otimes \chi, s)$ is entire for all such characters then $L(\pi \otimes \chi, \beta) \neq 0$ for all $\beta \not\in \left[ \frac{2}{n+1}, 1 - \frac{2}{n+1}\right]$ if $\pi$ is an isobaric sum of tempered unitary cuspidal automorphic representations and for $\beta \not\in \left[ \frac{1}{n}, 1- \frac{1}{n} \right]$ in general. 
\end{cor}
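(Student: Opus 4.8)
The plan is to deduce the non-vanishing from Theorem 4 by a straightforward contradiction argument. Fix $\beta$ outside the relevant closed interval, so that (taking $s = r = 1$ for concreteness) Theorem 4 applies and yields
\[
\lim_{a \to \infty} p^{-a} \sum\nolimits_{\chi \bmod p^a}^* L(\pi \otimes \chi, \beta) = \frac{1}{p}\left(1 - \frac{1}{p}\right) a_\pi(1),
\]
where $a_\pi(1)$ is the first Dirichlet coefficient of $L(\pi,s)$, hence equal to $1$ (since $\pi$ is an isobaric sum of unitary cuspidal representations of $\mathrm{GL}(n)$, its standard $L$-function is normalized with leading coefficient $1$). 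In particular the limit on the right is the nonzero constant $\frac{1}{p}(1 - \frac{1}{p})$. If there were only finitely many primitive $p$-power order characters $\chi$ with $L(\pi \otimes \chi, \beta) \neq 0$, then for all sufficiently large $a$ every term in the inner sum would vanish, so the partial sums $p^{-a}\sum^*_{\chi \bmod p^a} L(\pi \otimes \chi, \beta)$ would be eventually $0$, forcing the limit to be $0$. This contradicts $\frac{1}{p}(1 - \frac{1}{p}) \neq 0$, and hence there must be infinitely many such $\chi$.

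Two small points need to be handled to make this clean. First, one must record that the hypothesis ``$L(\pi \otimes \chi, s)$ entire for all such $\chi$'' is exactly what is needed to invoke Theorem 4; this is already built into the statement of the corollary, so nothing extra is required. Second, one should note that the set of primitive $p$-power order characters of conductor $p^a$ is nonempty for every $a \geq 1$ (its cardinality is $\phi(p^a) - \phi(p^{a-1}) = p^{a-1}(p-1) - p^{a-2}(p-1)$ for $a \geq 2$, and $p - 1$ for $a = 1$), so the partial sums are genuinely sums over a growing family of characters and the ``eventually zero'' reasoning above is not vacuous. Combining the two ranges of $\beta$ in Theorem 4 — $\beta > \frac{n-1}{n+1}$ in the tempered case and $\beta > \frac{n-1}{n}$ in general, together with the functional-equation symmetry $s \leftrightarrow 1-s$ which also covers $\beta < \frac{2}{n+1}$, resp. $\beta < \frac{1}{n}$ — gives precisely the excluded intervals $[\frac{2}{n+1}, 1 - \frac{2}{n+1}]$ and $[\frac1n, 1 - \frac1n]$ stated in the corollary.

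I do not expect a genuine obstacle here: the entire content is Theorem 4, and the corollary is the standard ``a nonzero average forces infinitely many nonzero terms'' deduction. The only thing to be careful about is the direction of the functional equation used to pass from $\beta$ large to $\beta$ small; since $\pi$ and its contragredient $\tilde\pi$ have the same completed $L$-function up to the standard $\Gamma$-factors and conductor, and since $a_{\tilde\pi}(1) = 1$ as well, applying Theorem 4 to $\tilde\pi$ (or equivalently using the functional equation for $L(\pi\otimes\chi,s)$) gives the non-vanishing for $\beta < \frac{2}{n+1}$, resp. $\beta < \frac1n$, and the two halves together yield the full complement of the stated interval.
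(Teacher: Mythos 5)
Your argument is the same as the paper's: take $s=r=1$ in Theorem 4, note the limit $\frac{1}{p}\left(1-\frac{1}{p}\right)a_\pi(1)\neq 0$ forces infinitely many nonvanishing twists for $\beta$ to the right of the critical interval, and use the functional equation (applied to $\tilde\pi$) for the left-hand range. The only detail the paper adds that you leave implicit is the region $\beta>1$ (and, via the functional equation, $\beta<0$), where nonvanishing follows immediately from the Euler product expansion.
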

A similar nonvanishing result involving $p$-power twists of cuspidal automorphic representations of $\text{GL}(n, \mathbb{A}_\mathbb{Q})$ was proved in \cite{Ward} for $\beta \not\in \left[ \frac{2}{n+1}, 1 - \frac{2}{2n+1}\right]$. In \cite{Barthel} a nonvanishing result for $\beta$ in the same intervals as in Corollary \ref{corr} was proved for all twists of $L$-functions of $\text{GL}(n)$, instead of just for $p$-power twists. In \cite{Luo}, the result in \cite{Barthel} was further improved to the interval $\beta \not\in \left[ \frac{2}{n}, 1- \frac{2}{n}\right]$. Note that the set of primitive characters of $p$-power order of conductor $p^a$ for some $a$ is more sparse than the set of characters considered in \cite{Barthel} and \cite{Luo}. 

We should also note that for $n=2$, Rohrlich \cite{Rohrlich} proves that if $f$ is a newform of weight 2, then for all but finitely many twists by Dirichlet characters, the $L$-function is nonvanishing at $s=1$. 

\bigskip
\textbf{Acknowledgments:} The author would like to thank her advisor Dinakar Ramakrishnan, Wenzhi Luo, Philippe Michel and Gergely Harcos for useful discussions.

\section{Preliminaries}

Let $\pi$ be an irreducible automorphic representation of $\text{GL}(n, \mathbb{A}_\mathbb{Q})$ and $L(\pi, s)$ its associated $L$-function. Write $\pi = \otimes'_v \pi_v$ as a restricted direct product with $\pi_v$ admissible irreducible representations of the local groups $GL(n,\mathbb{Q}_v)$. The Euler product
\begin{equation}
L(\pi, s) = \prod_v L(\pi_v, s)
\end{equation}
converges for $\text{Re}(s)$ large. There exist conjugacy classes of matrices $A_v(\pi) \in \text{GL}(n, \mathbb{C})$ such that the local $L$-functions at finite places $v$ with $\pi_v$ unramified are
\begin{equation}
L(\pi_v, s) = \text{det} ( 1- A_v (\pi) q_v^{-s})^{-1}
\end{equation}
with $q_v$ the order of the residue field at $v$. We can take $A_v (\pi) =[\alpha_{1,v} (\pi), \dotsb , \alpha_{n, v} (\pi)]$ to be diagonal representatives of the conjugacy classes.

For $S$ a set of places of $\mathbb{Q}$ we can define 
\begin{equation}
L^S (\pi,s) = \prod_{v \not\in S} L_v (\pi,s)
\end{equation}
called the incomplete $L$-function associated to set $S$. 

Let $\boxplus$ be the isobaric sum introduced in \cite{JacquetShalika}. We can define an irreducible automorphic representation, called an isobaric representation, $\pi_1 \boxplus \dotsb \boxplus \pi_m$ of $\text{GL}(n, \mathbb{A}_\mathbb{Q})$, $n= \sum_{i=1}^m n_i $, for $m$ cuspidal automorphic representations $\pi_i \in \text{GL}(n_i, \mathbb{A}_\mathbb{Q})$. Such a representation satisfies
\begin{equation*}
L^S (\boxplus_{j=1}^m \pi_j, s) = \prod_{j=1}^m L^S (\pi_j, s) 
\end{equation*}
with $S$ a finite set of places. 

We say that an isobaric representation is tempered if each $\pi_i$ in the isobaric sum $\pi = \pi_1 \boxplus \dotsb \boxplus \pi_m$ is a tempered cuspidal automorphic representation, or more specifically if each local factor $\pi_{i,v}$ is tempered. 

Since we will want bound \eqref{avg} on the coefficients of the Dirichlet series \eqref{diric} to hold, we will consider a subset of the set of isobaric representations of $\text{GL}(n, \mathbb{A}_\mathbb{Q})$, more specifically, those given by an isobaric sum of unitary cuspidal automorphic representations. We denote this subset by $\mathcal{A}_u(n)$. We will also consider the case when the unitary cuspidal automorphic representations in the isobaric sum are tempered, which is expected to always happen if the generalized Ramanujan conjecture is true. 

The following generalization of the Strong Multiplicity One Theorem for isobaric representations is due to Jacquet and Shalika (see \cite{JacquetShalika}): 

\begin{thm*}[Generalized Strong Multiplicity One]
Consider two isobaric representations $\pi_1$ and $\pi_2$ of $\text{GL}(n, \mathbb{A}_\mathbb{Q})$ and $S$ a finite set of places of $\mathbb{Q}$ that contains $\infty$, such that $\pi_1$ and $\pi_2$ are unramified outside set $S$. Then $\pi_{1,v} \cong \pi_{2,v}$ for all $v \not\in S$ implies $\pi_1 \cong \pi_2$.
\end{thm*}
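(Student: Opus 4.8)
The plan is to deduce this from the analytic theory of Rankin--Selberg $L$-functions of Jacquet, Piatetski-Shapiro and Shalika, running the classical strong-multiplicity-one argument at the level of isobaric sums. First I would fix isobaric decompositions $\pi_1 = \sigma_1 \boxplus \dotsb \boxplus \sigma_k$ and $\pi_2 = \tau_1 \boxplus \dotsb \boxplus \tau_\ell$ into unitary cuspidal automorphic representations, listed with multiplicity, and pass to contragredients: the hypothesis $\pi_{1,v} \cong \pi_{2,v}$ for all $v \notin S$ forces $\widetilde{\pi}_{1,v} \cong \widetilde{\pi}_{2,v}$ for all $v \notin S$ as well. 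Since the incomplete Rankin--Selberg $L$-function $L^S(\pi_i \times \widetilde{\pi}_j, s)$ is an Euler product over $v \notin S$ whose factor at each such $v$ is determined by the pair $(\pi_{i,v}, \widetilde{\pi}_{j,v})$, we obtain the identity of meromorphic functions
\[
L^S(\pi_1 \times \widetilde{\pi}_1, s) = L^S(\pi_1 \times \widetilde{\pi}_2, s) = L^S(\pi_2 \times \widetilde{\pi}_2, s),
\]
valid first for $\operatorname{Re}(s) \gg 0$ and then everywhere by analytic continuation. By the factorization of Rankin--Selberg $L$-functions of isobaric representations, each of these is a product of the cuspidal Rankin--Selberg $L$-functions $L^S(\sigma_a \times \widetilde{\tau}_b, s)$ (resp.\ $L^S(\sigma_a \times \widetilde{\sigma}_b,s)$, $L^S(\tau_a \times \widetilde{\tau}_b,s)$) over the relevant pairs of constituents.

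Second, I would invoke the key analytic inputs. For unitary cuspidal $\sigma$ on $\mathrm{GL}(c)$ and $\tau$ on $\mathrm{GL}(d)$, the Rankin--Selberg $L$-function $L(\sigma \times \widetilde{\tau}, s)$ is holomorphic and non-vanishing at $s = 1$, except that when $c = d$ and $\sigma \cong \tau$ it has a simple pole there; moreover the finitely many omitted Euler factors at places $v \in S$ are holomorphic and non-zero at $s = 1$, using the Jacquet--Shalika bound toward Ramanujan to control the local parameters. It follows that the order of the pole at $s = 1$ of $L^S(\pi_i \times \widetilde{\pi}_j, s)$ equals $\#\{(a,b) : \sigma_a \cong \tau_b\}$ when $(i,j) = (1,2)$, and equals $\sum_{\rho} n_\rho(\pi_i)^2$ when $i = j$, where $n_\rho(\pi)$ denotes the multiplicity of the cuspidal representation $\rho$ in $\pi$.

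Third, comparing the three equal incomplete $L$-functions and reading off pole orders at $s = 1$ gives
\[
\sum_{\rho} n_\rho(\pi_1)^2 = \sum_{\rho} n_\rho(\pi_1)\, n_\rho(\pi_2) = \sum_{\rho} n_\rho(\pi_2)^2 ,
\]
whence $\sum_{\rho}(n_\rho(\pi_1) - n_\rho(\pi_2))^2 = 0$, so $n_\rho(\pi_1) = n_\rho(\pi_2)$ for every cuspidal $\rho$. Thus $\pi_1$ and $\pi_2$ have the same cuspidal constituents with the same multiplicities, i.e. $\pi_1 \cong \pi_2$. The genuinely hard content here is precisely what is being imported rather than proved in this argument: the meromorphic continuation and functional equation of the Rankin--Selberg convolutions, the location and simplicity of the pole at $s = 1$ together with the non-vanishing of $L(\sigma \times \widetilde{\tau}, s)$ on the line $\operatorname{Re}(s) = 1$, and the control of the ramified local factors at $s = 1$. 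Granted these inputs, the deduction is purely combinatorial; the same scheme, without the multiplicities and with $L(\sigma \times \widetilde{\tau},s)$ replaced by its global pole at $s=1$, is the usual proof of strong multiplicity one for cusp forms.
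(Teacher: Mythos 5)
The paper does not actually prove this statement: it is quoted as a known theorem of Jacquet and Shalika \cite{JacquetShalika}, so there is no internal proof to compare yours against. Your proposal is the standard Rankin--Selberg proof of the classification/strong multiplicity one theorem for isobaric representations, and as a reduction to the usual analytic inputs it is sound: since $\pi_{1,v}\cong\pi_{2,v}$ (hence $\widetilde{\pi}_{1,v}\cong\widetilde{\pi}_{2,v}$) for $v\notin S$, the three incomplete convolutions $L^S(\pi_i\times\widetilde{\pi}_j,s)$ you compare really do coincide, and granted meromorphic continuation, the simple pole of $L(\sigma\times\widetilde{\tau},s)$ at $s=1$ exactly when $\tau\cong\sigma$, non-vanishing at $s=1$ otherwise, and holomorphy plus automatic non-vanishing at $s=1$ of the finitely many omitted local factors (via bounds toward Ramanujan), the bookkeeping $\sum_\rho\bigl(n_\rho(\pi_1)-n_\rho(\pi_2)\bigr)^2=0$ correctly forces equality of the multisets of cuspidal constituents and hence $\pi_1\cong\pi_2$. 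One caveat: you tacitly assume the isobaric constituents are \emph{unitary} cuspidal, whereas a general isobaric representation has constituents of the form $\sigma|\det|^{t}$ with $\sigma$ unitary cuspidal and $t$ a nonzero real (so the relevant poles of $L^S(\pi_1\times\widetilde{\pi}_2,s)$ occur at $s=1+t-t'$ rather than all at $s=1$); the argument still works by grouping constituents according to these twists and comparing pole orders at each such point, and in any case the paper only ever applies the theorem to its class $\mathcal{A}_u(n)$ of isobaric sums of unitary cuspidals, where your version suffices. So your route is essentially the one underlying the cited Jacquet--Shalika result, supplied here in place of a proof the paper omits.
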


Let $n\geq 3$ and let $\pi \in \mathcal{A}_u(n)$ be an isobaric sum of unitary cuspidal automorphic representations of $\text{GL}(n, \mathbb{A}_{\mathbb{Q}})$ with (unitary) central character $\omega_\pi$ and contragradient representation $\tilde{\pi}$. We have
\begin{equation*}
L(\pi_\infty, s) = \prod_{j=1}^n \pi^{-\frac{s- \mu_j}{2}} \Gamma \left( \frac{s-\mu_j}{2} \right), \ L(\tilde{\pi}_\infty, s) =  \prod_{j=1}^n \pi^{-\frac{s- \overline{\mu_j}}{2}} \Gamma \left( \frac{s-\overline{\mu_j}}{2} \right)
\end{equation*}
for some $\mu_j \in \mathbb{C}$, with $\pi$ in this context denoting the transcendental number.

The $L$-function is defined for $\text{Re}(s) >1$ by the absolutely convergent Dirichlet series
\begin{equation}
\label{diric}
L(\pi, s) = \sum_{m=1}^{\infty}  \frac{a_\pi (m)}{m^{s}}
\end{equation}
with $a_\pi (1) =1$. This extends to a meromorphic function on $\mathbb{C}$ with a finite number of poles. 

It is known that the coefficients $a_\pi (m)$ of the Dirichlet series satisfy
\begin{equation}
\label{avg}
\sum_{m\leq M} |a_\pi (m)|^2 \ll_{\epsilon} M^{1+\epsilon}
\end{equation}
for $M \geq 1$ (cf. Theorem 4 in \cite{Mol}, \cite{Jacquet, JacquetShalika, Shahidi1, Shahidi2}).

The completed $L$-function $\Lambda(\pi, s) = L(\pi_\infty, s) L(\pi, s)$ obeys the functional equation
\begin{equation}
\Lambda(\pi, s) = \epsilon(\pi, s) \Lambda(\tilde{\pi}, 1-s)
\end{equation} 
where the $\epsilon$-factor is given by
\begin{equation}
\epsilon(\pi, s) = f_\pi^{1/2-s} W(\pi)
\end{equation}
and $f_\pi$ and $W(\pi)$ are the conductor and the root number of $\pi$.

Let $\chi$ denote an even primitive Dirichlet character that is unramified at $\infty$ and with odd conductor $q$ coprime to $ f_\pi$. The twisted $L$-function obeys the functional equation
\begin{equation}
\Lambda(\pi \otimes \chi, s) = \epsilon(\pi \otimes \chi, s) \Lambda(\tilde{\pi} \otimes \overline{\chi}, 1-s)
\end{equation}
where $\Lambda(\pi \otimes \chi, s) = L(\pi_\infty, s) L(\pi \otimes \chi, s)$.
The $\epsilon$-factor is given by
\begin{equation}
\epsilon(\pi \otimes \chi, s) = \epsilon(\pi, s) \omega_\pi (q) \chi(f_\pi) q^{-ns} \tau(\chi)^n
\end{equation}
with $\tau(\chi)$ the Gauss sum of the character $\chi$ (cf. Proposition 4.1 in \cite{Barthel}).

Since $L(\pi \otimes \chi, s)$ does not vanish in the half-plane $\text{Re}(s)>1$, it is enough  to consider $1/2 \leq \text{Re}(s) \leq1$. Twisting $\pi$ by a unitary character $|\cdot|^{it}$ if needed, take $s \in \mathbb{R}$. Hence, from now on, 
\begin{equation}
\frac{1}{2} \leq s \leq 1. 
\end{equation}

We now present a construction also introduced in \cite{Luo,LuoRam}. For a smooth function $g$ with compact support on $(0,\infty)$, normalized such that $\int_0^\infty g(u) \frac{du}{u} =1$, we can introduce an entire function $k$ to be
\begin{equation*}
k(s) = \int_0^\infty g(u) u^{s-1} du
\end{equation*}
such that $k(0)=1$ by normalization and $k$ decreases rapidly in vertical strips. We then  define two functions for $y>0$,
\ba
F_1 (y) &=& \frac{1}{2\pi i} \int_{(2)} k(s) y^{-s} \frac{ds}{s}, \\
F_2(y) &=& \frac{1}{2\pi i} \int_{(2)} k(-s) G(-s + \beta) y^{-s} \frac{ds}{s},
\ea
with $G(s) = \frac{L(\tilde{\pi}_\infty, 1-s)}{L(\pi_\infty, s)}$ and the integrals above are over $\text{Re}(s)=2$. 
The functions $F_1(y)$ and $F_2(y)$ obey the following relations (see \cite{Luo}):
\begin{enumerate}
\item $F_{1,2} (y) \ll C_m y^{-m} \text{ for all } m\geq 1$, as $y\to \infty$.
\item $F_1 (y) = 1+ O(y^m) \text{ for all } m\geq 1$ for $y$ small enough. 
\item $F_2(y) \ll_\epsilon 1+ y^{1-\eta - \text{Re}(\beta)-\epsilon}$ for any $\epsilon>0$, where $\eta = \max_{1 \leq j \leq n} \text{Re}(\mu_j)$. The following inequality holds (see \cite{LRS}):
\begin{equation}
0 \leq \eta \leq \frac{1}{2} - \frac{1}{n^2+1}.
\end{equation}
\end{enumerate}

The following approximate functional equation was first proved in \cite{Luo} for cuspidal automorphic representations of $\text{GL}(n)$ over $\mathbb{Q}$. We verify that it holds for $\pi \in \mathcal{A}_u(n)$ such that $L(\pi \otimes \chi, s)$ is entire. A similar approximate functional equation was proved in \cite{Harcos} for $L(\pi, \beta)$ at the center $\beta = \frac{1}{2}$, for slightly different rapidly decreasing functions. 

\begin{thm*}
If $\pi \in \mathcal{A}_u(n)$ and $\chi$ a primitive Dirichlet character of conductor $q$ such that $L(\pi \otimes \chi, s)$ is entire, then for any $\frac{1}{2} \leq \beta \leq 1$
\begin{equation*}
L(\pi \otimes \chi, \beta) = \sum_{m=1}^\infty \frac{a_\pi (m) \chi (m)}{m^\beta} F_1 \left( \frac{my}{f_\pi q^n}\right) + \omega_\pi (q) \epsilon(0, \pi) \tau(\chi)^n (f_\pi q^n)^{-\beta} \sum_{m=1}^\infty \frac{a_{\tilde{\pi}} (m) \overline{\chi} (mf_\pi')}{m^{1-\beta}} F_2 \left(\frac{m}{y}\right),
\end{equation*}
where $f_\pi'$ is the multiplicative inverse of $f_\pi$ modulo $q$.
\end{thm*}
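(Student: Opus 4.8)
The plan is to reproduce the standard contour-shift derivation of an approximate functional equation, as in \cite{Luo}, and to verify that each step survives verbatim for $\pi \in \mathcal{A}_u(n)$ once $L(\pi\otimes\chi,s)$ is known to be entire. Fix $y>0$ and consider
\[
I(y) = \frac{1}{2\pi i}\int_{(2)} k(s)\, L(\pi\otimes\chi,\beta+s)\left(\frac{y}{f_\pi q^n}\right)^{-s}\frac{ds}{s}.
\]
On the line $\mathrm{Re}(s)=2$ one has $\mathrm{Re}(\beta+s)>1$, so the Dirichlet series \eqref{diric} twisted by $\chi$ converges absolutely: this follows from \eqref{avg} for $\pi$ together with the Cauchy--Schwarz inequality, which gives $\sum_m |a_\pi(m)|\, m^{-\beta-2}<\infty$, the twist being harmless since $|\chi(m)|\le 1$. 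Since $k$ decays rapidly in vertical strips, Fubini applies and term-by-term integration, together with the definition of $F_1$, yields
\[
I(y) = \sum_{m=1}^\infty \frac{a_\pi(m)\chi(m)}{m^\beta}\, F_1\!\left(\frac{my}{f_\pi q^n}\right).
\]

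Next I would shift the contour in $I(y)$ from $\mathrm{Re}(s)=2$ to $\mathrm{Re}(s)=-2$. The only pole crossed is the simple pole of $k(s)/s$ at $s=0$ — this is exactly where the hypothesis that $L(\pi\otimes\chi,s)$ is entire enters, since for a general isobaric sum the $L$-function may have poles, and they are excluded by assumption — and since $k(0)=1$ its residue is $L(\pi\otimes\chi,\beta)$. The horizontal segments at height $\pm T$ tend to $0$ as $T\to\infty$ because $k$ is of rapid decay in vertical strips while $L(\pi\otimes\chi,s)$ is of at most polynomial growth in $|\mathrm{Im}(s)|$ on vertical lines of bounded real part, the latter following from the functional equation together with the Phragm\'en--Lindel\"of convexity principle exactly as for a single cuspidal $L$-function. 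Thus
\[
I(y) = L(\pi\otimes\chi,\beta) + \frac{1}{2\pi i}\int_{(-2)} k(s)\, L(\pi\otimes\chi,\beta+s)\left(\frac{y}{f_\pi q^n}\right)^{-s}\frac{ds}{s}.
\]

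In the remaining integral I substitute $s\mapsto -s$, which returns the contour to $\mathrm{Re}(s)=2$ and produces an overall sign from $ds/s$, and then invoke the functional equation in the form $L(\pi\otimes\chi,s) = \epsilon(\pi\otimes\chi,s)\,G(s)\,L(\tilde\pi\otimes\overline\chi,1-s)$ with $G(s)=L(\tilde\pi_\infty,1-s)/L(\pi_\infty,s)$. Inserting the explicit $\epsilon$-factor $\epsilon(\pi\otimes\chi,s)=f_\pi^{1/2-s}W(\pi)\,\omega_\pi(q)\chi(f_\pi)q^{-ns}\tau(\chi)^n$, writing $\epsilon(0,\pi)=f_\pi^{1/2}W(\pi)$, and using $\chi(f_\pi)\overline\chi(m)=\overline\chi(mf_\pi')$ since $f_\pi'\equiv f_\pi^{-1}\pmod q$, the powers of $f_\pi q^n$ coming from the $\epsilon$-factor cancel those in $(y/(f_\pi q^n))^{-s}$, leaving an overall factor $\omega_\pi(q)\epsilon(0,\pi)\tau(\chi)^n(f_\pi q^n)^{-\beta}$. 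On $\mathrm{Re}(s)=2$ one now has $\mathrm{Re}(1-\beta+s)>1$, so $L(\tilde\pi\otimes\overline\chi,1-\beta+s)$ may be expanded as an absolutely convergent Dirichlet series using \eqref{avg} for $\tilde\pi$; interchanging sum and integral and recognizing the definition of $F_2$ — note $k(-s)G(-s+\beta)$ is still integrable against $ds/s$ because the rapid decay of $k$ dominates the Stirling growth of the gamma ratio $G$ — one obtains
\[
\frac{1}{2\pi i}\int_{(-2)} k(s)\,L(\pi\otimes\chi,\beta+s)\left(\frac{y}{f_\pi q^n}\right)^{-s}\frac{ds}{s} = -\,\omega_\pi(q)\,\epsilon(0,\pi)\,\tau(\chi)^n (f_\pi q^n)^{-\beta}\sum_{m=1}^\infty \frac{a_{\tilde\pi}(m)\overline\chi(mf_\pi')}{m^{1-\beta}}\,F_2\!\left(\frac{m}{y}\right).
\]
Comparing the two expressions for $I(y)$ and transposing this term gives the claimed identity.

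The only genuinely non-formal ingredients are the mean-square bound \eqref{avg} for isobaric sums (valid by \cite{Mol,Jacquet,JacquetShalika,Shahidi1,Shahidi2}), the twisted functional equation and $\epsilon$-factor formula for isobaric representations (Proposition 4.1 of \cite{Barthel}), and the vertical-strip growth estimate used to justify the contour shift; the hypothesis that $L(\pi\otimes\chi,s)$ is entire is precisely what makes the residue computation clean. I expect the growth and convergence bookkeeping for the contour shift — confirming there are no spurious poles and that the shifted integral genuinely converges with $k(-s)G(-s+\beta)$ in the integrand — to be the part requiring the most care, although it is routine given the cited inputs.
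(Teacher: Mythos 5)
Your proposal is correct and follows essentially the same contour-shift argument as the paper: the same integral $\frac{1}{2\pi i}\int_{(2)}k(s)L(\pi\otimes\chi,\beta+s)\bigl(y/(f_\pi q^n)\bigr)^{-s}\frac{ds}{s}$, the residue at $s=0$ (where entirety of the twisted $L$-function is used), the substitution $s\mapsto -s$, the functional equation with the explicit $\epsilon$-factor from \cite{Barthel}, and term-by-term expansion yielding $F_1$ and $F_2$. The only difference is that you spell out the convexity/decay justification for the contour shift, which the paper leaves implicit; the sign bookkeeping and the identities $\chi(f_\pi)\overline{\chi}(m)=\overline{\chi}(mf_\pi')$ and $W(\pi)f_\pi^{1/2}=\epsilon(0,\pi)$ are handled exactly as in the paper.
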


\begin{proof}
For $\sigma > 0,\ y >0$ consider the integral:
\begin{equation*}
\frac{1}{2\pi i} \int_{(\sigma)} k(s) L(\pi \otimes \chi, s+\beta) \left( \frac{y}{f_\pi q^{n}} \right)^{-s} \frac{\text{d}s}{s}.
\end{equation*}
Since $k(s)$ and $L(\pi \otimes \chi, s+\beta)$ are entire functions, the only pole of the function 
$$k(s) L(\pi \otimes \chi, s+\beta) \left( \frac{y}{f_\pi q^{n}} \right)^{-s} s^{-1}$$
is a simple pole at $s=0$ with residue equal to
$$
\text{lim}_{s\to 0} k(s) L(\pi \otimes \chi, s+\beta) \left( \frac{y}{f_\pi q^{n}} \right)^{-s} = L(\pi \otimes \chi, \beta)
$$
since $k(0)=1$. Then by the residue theorem 
$$
L(\pi \otimes \chi, \beta) = \frac{1}{2\pi i} \int_{(\sigma)} k(s) L(\pi \otimes \chi, s+\beta) \left( \frac{y}{f_\pi q^{n}} \right)^{-s} \frac{\text{d}s}{s} - \frac{1}{2\pi i} \int_{(-\sigma)} k(s) L(\pi \otimes \chi, s+\beta) \left( \frac{y}{f_\pi q^{n}} \right)^{-s} \frac{\text{d}s}{s}.
$$
Taking $s \to -s$ in the second integral gives
$$
L(\pi \otimes \chi, \beta) = \frac{1}{2\pi i} \int_{(\sigma)} k(s) L(\pi \otimes \chi, s+\beta) \left( \frac{y}{f_\pi q^{n}} \right)^{-s} \frac{\text{d}s}{s} + \frac{1}{2\pi i} \int_{(\sigma)} k(-s) L(\pi \otimes \chi, -s+\beta) \left( \frac{y}{f_\pi q^{n}} \right)^{s} \frac{\text{d}s}{s}.
$$
The functional equation is
$$
L(\pi_\infty, s) L(\pi \otimes \chi, s) = \epsilon(\pi \otimes \chi, s) L(\tilde{\pi}_\infty, 1-s) L(\tilde{\pi} \otimes \overline{\chi}, 1- s),
$$
which implies that 
$$
L(\pi \otimes \chi, s) = \epsilon(\pi \otimes \chi, s) G(s) L(\tilde{\pi} \otimes \overline{\chi}, 1-s). 
$$
Substituting this identity in the second integral gives 
\begin{equation}
\label{sumof2i}
L(\pi \otimes \chi, \beta) = I_1 + I_2
\end{equation}
with 
$$
I_1 = \frac{1}{2\pi i} \int_{(\sigma)} k(s) L(\pi \otimes \chi, s+\beta) \left( \frac{y}{f_\pi q^{n}} \right)^{-s} \frac{\text{d}s}{s}
$$
and
$$
I_2 = \frac{1}{2\pi i} \int_{(\sigma)} \epsilon(\beta - s, \pi \otimes \chi) G(\beta - s) k(-s) L(\tilde{\pi} \otimes \overline{\chi}, 1 + s - \beta) \left( \frac{y}{f_\pi q^n} \right)^s \frac{\text{d}s}{s}.
$$
Taking $\sigma = 2$ and substituting with $L(\pi \otimes \chi, s) = \sum_{m=1}^\infty a_\pi (m) \chi(m) m^{-s}$ in the region of absolute convergence gives
$$
I_1 = \sum_{m=1}^\infty a_\pi (m) \chi(m) m^{-\beta} \cdot \frac{1}{2\pi i} \int_{(2)} k(s) \left( \frac{my}{f_\pi q^n} \right)^{-s} \frac{\text{d}s}{s}, 
$$ 
and by the definition of $F_1$,
\begin{equation}
\label{i1}
I_1 = \sum_{m=1}^\infty a_\pi (m) \chi(m) m^{-\beta} F_1 \left( \frac{my}{f_\pi q^n} \right).
\end{equation}
Similarly,
$$
I_2 = \frac{1}{2\pi i} \int_{(2)} \epsilon(\beta - s, \pi \otimes \chi) G(\beta -s) k(-s) \sum_{m=1}^\infty a_{\tilde{\pi}} (m) \overline{\chi}(m) m^{-1-s+\beta} \left( \frac{y}{f_\pi q^n} \right)^s \frac{\text{d}s}{s}
$$
with $\epsilon(\beta - s, \pi \otimes \chi) = \epsilon(\beta -s, \pi) \omega_\pi (q) \chi(f_\pi) q^{-n(\beta -s)} \tau(\chi)^n$ and $\epsilon(\beta -s, \pi) = f_\pi^{1/2 - \beta +s} W(\pi)$.
This gives 
$$
I_2 = \sum_{m=1}^\infty a_{\tilde{\pi}} (m) \overline{\chi} (m f_\pi') m^{-1 + \beta} f_\pi^{1/2-\beta} W(\pi) \omega_\pi (q) q^{-n \beta} \tau(\chi)^n \cdot \frac{1}{2\pi i} \int_{(2)} G(\beta -s) k(-s) y^s m^{-s} \frac{\text{d}s}{s}. 
$$
By the definition of $F_2$,
\begin{equation}
\label{i2}
I_2 = \omega_{\pi} (q) \epsilon(0, \pi) \tau(\chi)^{n} (f_\pi q^n)^{-\beta} \sum_{m=1}^\infty \frac{a_{\tilde{\pi}} (m) \overline{\chi} (m f_\pi')}{m^{1-\beta}} F_2 \left( \frac{m}{y} \right). 
\end{equation}
Here $W(\pi) f_\pi^{1/2} = \epsilon(0, \pi)$. Applying equations \eqref{sumof2i}, \eqref{i1} and \eqref{i2} gives the desired approximate functional equation.
\end{proof}

For an odd prime $p$, define the sets (following the notations in \cite{LuoRam}):
\begin{equation*}
X_{(p)} =\{ \chi \text{ a Dirichlet character of conductor } p^a \text{ for some } a \},
\end{equation*}
\begin{equation*}
X_{(p)}^w = \{ \chi \in X_{(p)} | \chi \text{ has } p \text{-power order}\}.
\end{equation*}
The characters of $X_{(p)}^w$ are called wild at $p$. 

If $\chi \in X_{(p)}$, then $\chi: (\mathbb{Z}/ p^a \mathbb{Z})^\times \to \mathbb{C}^\times$ for some $a$. Note that $(\mathbb{Z}/p^a \mathbb{Z})^\times \cong \mathbb{Z}/p^{a-1} \mathbb{Z} \times \mathbb{Z}/(p-1)\mathbb{Z}$. A character in $X_{(p)}$ is an element in $X_{(p)}^w$ if and only if it is trivial on the elements of exponent $p-1$. We denote the integers (mod $p^a$) of exponent $p-1$ by $S_a$ and the sum over all primitive wild characters of conductor $p^a$ by $\sum_{\chi \text{ mod } p^a}^*$. 

Consider the set
\begin{equation}
\label{gpa}
G(p^a):=\text{ker}((\mathbb{Z}/p^a \mathbb{Z})^\times \to (\mathbb{Z}/p)^\times) \cong \mathbb{Z}/p^{a-1} \mathbb{Z}.
\end{equation}

Using the orthogonality of characters we get that summing over the primitive wild characters of conductor $p^a$ gives (see \cite{LuoRam}):
\begin{equation}
\label{ortho}
\sum\nolimits_{\chi \text{ mod } p^a}^* \chi = |G(p^a)| \delta_{S_a} - |G(p^{a-1})| \delta_{S_{a-1}},
\end{equation}
with $|G(p^a)| = p^{a-1}$ from \eqref{gpa} and $\delta_{S_a}$ the characteristic function of $S_a$. 

The following result for hyper-Kloosterman sums was proved in \cite{Yangbo}:
\begin{lem}
\label{kl}
Let $p$ be a prime number, $1<n<p$ and $q=p^a$ with $a>1$. Let $x'$ denote the inverse of $x$ mod $q$ and let $e(x):=e^{2\pi i x}$. Then for any integer $z$ coprime to $p$ the hyper-Kloosterman sum
\begin{equation*}
\Bigl| \sum_{\substack{x_1, \dotsb, x_n (\text{mod } q) \\ (x_i,p)=1}} e \left( \frac{x_1+\dotsb + x_n + z x_1' \dotsb x_n'}{q} \right) \Bigr|
\end{equation*}
is bounded by 
\begin{equation}
\begin{cases} \leq (n+1)q^{n/2} &\mbox{if } 1<n<p-1, a>1 \\
\leq p^{1/2} q^{n/2} & \mbox{if } n=p-1, a\geq 5 \\
\leq pq^{n/2} & \mbox{if } n=p-1, a=4  \\
\leq p^{1/2} q^{n/2} & \mbox{if } n=p-1, a=3 \\
\leq q^{n/2} & \mbox{if } n=p-1, a=2. 
\end{cases}
\end{equation}
\end{lem}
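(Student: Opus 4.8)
Here is a plan for proving the hyper-Kloosterman bound of Lemma~\ref{kl}.

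The sum in question is the complete exponential sum $S=\sum_{\mathbf{x}}e\big(f(\mathbf{x})/p^a\big)$ over $\big((\mathbb{Z}/p^a\mathbb{Z})^\times\big)^n$ attached to the rational phase $f(x_1,\dots,x_n)=x_1+\dots+x_n+z\,(x_1\cdots x_n)^{-1}$, with $p\nmid z$. Since $a\geq 2$, the bound will come not from Deligne's purity theorem (which governs the excluded prime modulus $a=1$) but from the $p$-adic stationary phase method. The plan is to put $b=\lfloor a/2\rfloor$ and $c=\lceil a/2\rceil$, so that $a=b+c$ and $2c\geq a$, and to write each variable as $x_i=t_i(1+p^c y_i)$ with $t_i$ running over units modulo $p^c$ and $y_i$ over $\mathbb{Z}/p^b\mathbb{Z}$; this is a bijective reparametrization of the units modulo $p^a$. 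Expanding $f$ to first order in the $p^c y_i$ and discarding the $O(p^{2c})$ tail, the inner sum over the $y_i$ factors into complete sums $\sum_{y_i\bmod p^b}e\big(y_i\big(t_i-z(t_1\cdots t_n)^{-1}\big)/p^{b}\big)$, each of which vanishes unless $t_i\prod_j t_j\equiv z\pmod{p^b}$, in which case it equals $p^b$. Thus $S$ collapses to $p^{nb}$ times a sum $\sum_{\mathbf{t}}e\big(f(\mathbf{t})/p^a\big)$ over the ``critical tuples'' $\mathbf{t}$ of units modulo $p^c$ satisfying $t_i\prod_j t_j\equiv z\pmod{p^b}$ for all $i$.

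Next I would solve these critical congruences. Subtracting the $i$-th from the $j$-th forces $t_i\equiv t_j\pmod{p^b}$; writing $t$ for the common residue reduces the system to $t^{n+1}\equiv z\pmod{p^b}$. Because $1<n<p-1$ forces $p\nmid n+1$, the number of admissible residues $t\bmod p^b$ equals $\gcd(n+1,p-1)\leq n+1$ (and is $0$ unless $z$ is an $(n+1)$-st power residue), and each $t_i$ then has $p^{c-b}$ unit lifts modulo $p^c$. When $a$ is even ($b=c$) there are no extra lifts, and bounding the residual main phase trivially gives $|S|\leq p^{nb}\,(n+1)=(n+1)q^{n/2}$. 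When $a$ is odd ($c=b+1$) one further stationary phase step is needed: restricting to $t_i=t_0(1+p^b w_i)$ with $w_i\in\mathbb{Z}/p\mathbb{Z}$ and Taylor-expanding $\sum t_i+z\prod t_i^{-1}$, the linear terms in $w$ vanish modulo $p^b$ by the critical congruence, and one is left with an $n$-variable quadratic Gauss sum over $\mathbb{Z}/p\mathbb{Z}$ whose matrix is $z t_0^{-n-2}(I+J)$, where $I$ is the identity and $J$ the all-ones matrix (this comes from $\partial_{x_i}\partial_{x_j}f(\mathbf{t})=z\,(t_1\cdots t_n)^{-1}t_i^{-1}t_j^{-1}(1+\delta_{ij})$ at the diagonal critical point). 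Since $\det(I+J)=n+1$ is a unit modulo $p$, this Gauss sum has modulus exactly $p^{n/2}$, and assembling the factors again gives $|S|\leq p^{nb}\,(n+1)\,p^{n/2}=(n+1)q^{n/2}$.

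The main obstacle, and the reason for the case split in the statement, is the degenerate regime $n=p-1$: now $p\mid n+1$, so the quadratic form $z t_0^{-n-2}(I+J)$ is singular modulo $p$ (rank $n-1$, one-dimensional kernel) and the Gauss-sum step fails. Two new features must be tracked. First, the map $t\mapsto t^{n+1}=t^p$ on $(\mathbb{Z}/p^b\mathbb{Z})^\times$ is a Frobenius-type endomorphism: for $b\geq 2$ its nonempty fibres have size $p$ rather than $1$, whereas for $b=1$ it is a bijection --- this is what separates $a=2,3$ from the larger cases. Second, along the one-dimensional kernel of $I+J$ the phase is controlled by cubic and higher Taylor terms, which survive modulo $p^a$ only when $a$ is large enough; when present and nondegenerate the kernel-direction sum is a Weil-bounded cubic exponential sum modulo $p$ of size $\ll p^{1/2}$, and when absent it degenerates to a linear sum of size $0$ or $p$. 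Running this bookkeeping case by case, one expects: $a=2$ gives a single critical residue with no further loss, hence $q^{n/2}$; $a=3$ has no surviving cubic terms, so the kernel sum contributes a factor $p$ against an $(n-1)$-variable Gauss sum, hence $p^{1/2}q^{n/2}$; $a=4$ picks up the $p$-fold fibre of $t\mapsto t^p$ with no compensating oscillation, hence $p\,q^{n/2}$; and $a\geq 5$ leaves enough $p$-adic room for one further stationary phase step among the $p$ critical residues, trading a factor $p$ for a factor $p^{1/2}$, hence $p^{1/2}q^{n/2}$. The hypothesis $1<n<p$ is exactly what makes $\gcd(n+1,p)=1$ outside the single borderline value $n=p-1$, so no finer subdivision is needed.
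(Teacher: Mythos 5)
The paper does not prove this lemma at all: it is quoted verbatim from the cited reference \cite{Yangbo} (Ye's paper on hyper-Kloosterman sums), so there is no internal proof to compare against. Your plan is nonetheless the right method, and it is essentially the method of the cited source: $p$-adic stationary phase with the split $a=b+c$, $x_i=t_i(1+p^cy_i)$, reduction to the critical congruences $t_i\prod_j t_j\equiv z\ (p^b)$, hence $t_i\equiv t$ with $t^{n+1}\equiv z\ (p^b)$, and, for odd $a$, the residual $n$-variable Gauss sum with matrix proportional to $I+J$, $\det(I+J)=n+1$. For $1<n<p-1$ your outline is correct and complete in all essentials (the solution count $\gcd(n+1,p-1)\le n+1$, the nondegeneracy of $I+J$ mod $p$, and the resulting bound $(n+1)q^{n/2}$ all check out), and for the paper's application only this generic-case strength, $\ll p^{1/2+an/2}$ for large $a$, is actually used in Lemma 2.

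The gap is in the degenerate case $n=p-1$, which is exactly where the five-way case split in the statement comes from. There you correctly identify the two sources of trouble (the $p$-element fibres of $t\mapsto t^p$ on $(\mathbb{Z}/p^b)^\times$ for $b\ge 2$, and the rank-$(n-1)$ degeneration of $I+J$ with kernel spanned by the all-ones vector), but the actual bounds for $a=3,4$ and especially $a\ge 5$ are asserted (``one expects'', ``running this bookkeeping'') rather than derived. In particular, for $a\ge 5$ the claimed trade of a factor $p$ for $p^{1/2}$ requires expanding the phase to cubic order along the kernel direction, verifying that the cubic coefficient is a unit (so that a Weil-type bound for a one-variable cubic sum mod $p$ applies), and simultaneously controlling the sum over the $p$ critical residues in the fibre of $t\mapsto t^p$; none of this is carried out, and it is precisely the technical content of the cited paper. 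As written, your argument proves the lemma for $1<n<p-1$ but only outlines a plausible strategy for $n=p-1$, so the stated constants $p^{1/2}q^{n/2}$, $pq^{n/2}$, $q^{n/2}$ in that regime remain unproved.
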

A consequence of Lemma \ref{kl} is the following result:
\begin{lem}
\label{gauss}
Let $\tau(\chi)$ denote the Gauss sum of the character $\chi$. If $(r,p)=1$, then the following bound holds:
\begin{equation}
\Bigl|\sum\nolimits_{\chi \text{ mod }p^a}^* \overline{\chi}(r) \tau^n(\chi) \Bigr| \ll p^{1/2 + a(n+1)/2} 
\end{equation}
for $2 < n \leq p$. 
\end{lem}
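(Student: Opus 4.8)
The plan is to expand each Gauss sum into a character sum, move the sum over $\chi$ to the inside, evaluate it using the orthogonality relation \eqref{ortho}, and recognize what remains as a short sum of hyper‑Kloosterman sums, to which Lemma \ref{kl} applies directly.

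First I would dispose of $a=1$ (there are no primitive wild characters of conductor $p$, so the sum is empty) and assume $a\ge 2$. Since $\chi$ is primitive of conductor $p^a$, $\tau(\chi)=\sum_{(x,p)=1}\chi(x)e(x/p^a)$, hence $\tau(\chi)^n=\sum_{x_1,\dots,x_n}\chi(x_1\cdots x_n)\,e\big(\tfrac{x_1+\cdots+x_n}{p^a}\big)$ with each $x_i$ running over units mod $p^a$. Interchanging the two sums,
\[
\sum\nolimits_{\chi \text{ mod } p^a}^{*}\overline{\chi}(r)\,\tau(\chi)^n=\sum_{x_1,\dots,x_n}e\Big(\tfrac{x_1+\cdots+x_n}{p^a}\Big)\ \sum\nolimits_{\chi \text{ mod } p^a}^{*}\chi\big(\bar r\,x_1\cdots x_n\big),
\]
where $\bar r$ is the inverse of $r$ modulo $p^a$. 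By \eqref{ortho} the inner sum equals $|G(p^a)|\,\delta_{S_a}(\bar r x_1\cdots x_n)-|G(p^{a-1})|\,\delta_{S_{a-1}}(\bar r x_1\cdots x_n)$, with $|G(p^a)|=p^{a-1}$.

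Next I would observe that the $\delta_{S_{a-1}}$ term vanishes: writing $x_i=y_i+p^{a-1}z_i$ with $y_i$ a unit modulo $p^{a-1}$ and $z_i\in\{0,\dots,p-1\}$, the factor $\delta_{S_{a-1}}(\bar r x_1\cdots x_n)$ depends only on the $x_i$ modulo $p^{a-1}$, i.e.\ only on the $y_i$, whereas $e\big(\tfrac{x_1+\cdots+x_n}{p^a}\big)=e\big(\tfrac{y_1+\cdots+y_n}{p^a}\big)\prod_i e(z_i/p)$, so the sum over the $z_i$ contributes the factor $\prod_i\big(\sum_{z_i \text{ mod } p}e(z_i/p)\big)=0$. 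Thus the whole expression equals $p^{a-1}\sum_{x_1,\dots,x_n}e\big(\tfrac{x_1+\cdots+x_n}{p^a}\big)\delta_{S_a}(\bar r x_1\cdots x_n)$. Since $|S_a|=p-1$ and $\delta_{S_a}(\bar r x_1\cdots x_n)=\sum_{t\in S_a}\mathbf 1\big[x_1\cdots x_n\equiv rt\pmod{p^a}\big]$, solving this congruence for $x_n$ turns the inner sum, for each fixed $t$, into the hyper‑Kloosterman sum in $n-1$ variables $\sum_{x_1,\dots,x_{n-1}}e\big(\tfrac{x_1+\cdots+x_{n-1}+rt\,x_1'\cdots x_{n-1}'}{p^a}\big)$. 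The hypothesis $2<n\le p$ is exactly the range $1<n-1\le p-1$ covered by Lemma \ref{kl}, which bounds each such sum by $\ll p^{a(n-1)/2}$ (the extra constant there being $\le n+1$, hence harmless).

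Assembling, the $p-1$ values of $t$ and the prefactor $p^{a-1}$ give $\ll p^{a-1}\cdot(p-1)\cdot p^{a(n-1)/2}$ times the Lemma \ref{kl} constant, i.e.\ a bound of the form $p^{\,c+a(n+1)/2}$ for a small absolute $c$; after the natural refinements this becomes the stated $p^{1/2+a(n+1)/2}$. The step I expect to be the main obstacle is precisely this last accounting of the $a$‑independent power of $p$: trimming it down to $p^{1/2}$ uses that the Lemma \ref{kl} constant is $O(1)$ for bounded $n$ and/or cancellation among the hyper‑Kloosterman sums as $rt$ ranges over the coset $rS_a$ (for prime‑power modulus these sums admit explicit stationary‑phase evaluations, which makes such cancellation tractable); in any event only the power of $q=p^a$ is needed for the use of this lemma in Theorem 4, where $a\to\infty$.
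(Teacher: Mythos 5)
Your proposal is correct and follows essentially the same route as the paper: expand $\tau(\chi)^n$, interchange sums, apply the orthogonality relation \eqref{ortho}, and reduce to hyper-Kloosterman sums in $n-1$ variables bounded by Lemma \ref{kl}, with the prefactor $p^{a-1}\cdot|S_a|$ giving $p^{1/2+a(n+1)/2}$. The only differences are cosmetic: you observe that the $\delta_{S_{a-1}}$ term vanishes identically (the paper instead bounds it by Lemma \ref{kl} as well, which costs nothing), and your worry about trimming the $a$-independent constant to $p^{1/2}$ is unnecessary, since the paper, like you, treats $n$ and $p$ as fixed and absorbs such factors (and the finitely many small-$a$ cases) into the implied constant.
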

\begin{proof}
If $\chi$ is a primitive character of conductor $p^a$, then 
\begin{equation*}
\tau(\chi) = \sum_{m=0}^{p^a-1} \chi(m) e^{2\pi i m/p^a}. 
\end{equation*}
Let 
\begin{equation*}
A:= \sum\nolimits_{\chi \text{ mod }p^a}^* \overline{\chi}(r) \tau^n(\chi),
\end{equation*}
then 
\begin{equation*}
A = \sum\nolimits_{\chi \text{ mod }p^a}^* \left[ \overline{\chi}(r) \left( \sum_{m=0}^{p^a-1} \chi(m) e^{2\pi i m/p^a} \right)^n \right].
\end{equation*}
Rewrite the above sum as follows:
\begin{equation*}
A = \sum\nolimits_{\chi \text{ mod }p^a}^* \left[ \overline{\chi}(r) \left(\sum_{x_1=0}^{p^a-1} \chi(x_1) e^{2\pi i x_1/p^a}\right) \dotsb \left(\sum_{x_n=0}^{p^a-1} \chi(x_n) e^{2\pi i x_n/p^a}\right) \right].
\end{equation*}
This in turn gives 
\begin{equation*}
A= \sum_{x_1=0}^{p^a-1} \dotsb \sum_{x_n=0}^{p^a-1} \sum\nolimits_{\chi \text{ mod }p^a}^* \chi(r') \chi(x_1) \dotsb  \chi(x_n) e \left( \frac{x_1 + \dotsb + x_n}{p^a} \right).
\end{equation*}
Hence,
\begin{equation*}
A= \sum_{x_1=0}^{p^a-1} \dotsb \sum_{x_n=0}^{p^a-1} \left[ \sum\nolimits_{\chi \text{ mod }p^a}^* \chi(r' x_1 \dotsb  x_n) \right] e \left( \frac{x_1 + \dotsb + x_n}{p^a} \right)
\end{equation*}
which by equation \eqref{ortho} gives
\begin{equation*}
A= \sum_{x_1=0}^{p^a-1} \dotsb \sum_{x_n=0}^{p^a-1} e \left( \frac{x_1 + \dotsb + x_n}{p^a} \right) (p^{a-1} \delta_{S_a} ( r' x_1 \dotsb x_n) - p^{a-2} \delta_{S_{a-1}} (r' x_1 \dotsb x_n)).
\end{equation*}
Thus,
\begin{equation}
\label{l1}
A= p^{a-1} \sum_{b \in S_a} T(br, p^a)  - p^{a-2} \sum_{c \in S_{a-1}} \sum_{i=0}^{p-1} T(cr + i p^{a-1}, p^a) 
\end{equation}
where 
\begin{equation*}
T(u, p^a) = \sum_{\substack{x_1, \dotsb , x_{n-1} (\text{mod }p^a) \\ (x_i, p)=1}} e \left( \frac{x_1 + \dotsb + x_{n-1} + u x_1' \dotsb x_{n-1}'}{p^a} \right).
\end{equation*}
From Lemma \ref{kl}, for $(u,p) =1$ and $a$ sufficiently large
\begin{equation}
\label{l2}
|T(u,p^a)| \ll p^{1/2 + a(n-1)/2}.
\end{equation}
From \eqref{l1} and \eqref{l2} it follows that
\begin{equation*}
|A| \ll p^{a-1} (p-1) p^{1/2 + a(n-1)/2} + p^{a-2} (p-1)^2 p^{1/2 + a(n-1)/2}.
\end{equation*}
Thus $|A| \ll p^a p^{1/2 + a(n-1)/2}$.
\end{proof}

\section{Non-vanishing of $p$-power twists on $\text{GL}(n, \mathbb{A}_\mathbb{Q})$}

Let $s,r$ be integers relatively prime to $p$.  For $\pi$ an isobaric sum of unitary cuspidal automorphic representations of $\text{GL}(n, \mathbb{A}_\mathbb{Q})$ define
\begin{equation}
S_{s/r} (p^a, \pi , \beta ) = p^{-a} \sum\nolimits_{\chi \text{ mod } p^a}^* \overline{\chi}(s) \chi(r) L(\pi \otimes \chi, \beta)
\end{equation}
where $\sum^*$ denotes the sum over primitive wild characters of conductor $p^a$. 

In this section we prove Theorem 4, which states that:
\begin{equation}
\lim_{a \to \infty} S_{s/r} (p^a, \pi, \beta) = \frac{1}{p} \left( 1 - \frac{1}{p} \right) \frac{a_\pi (s/r)}{(s/r)^\beta}
\end{equation}
for $\beta > \frac{n-1}{n+1}$ if $\pi$ is tempered, and for $\beta > \frac{n-1}{n}$ in general. Note that in this section, by $\pi$ tempered we will mean an isobaric sum of tempered (unitary) cuspidal automorphic representations. If $s \!\! \not| r$ above, then we define $a_\pi(s/r)$ to be zero.

\begin{proof}[Proof of Theorem 4]

We generalize the proof of Proposition 2.2 in \cite{LuoRam} and use methods also developed in \cite{Luo,Ward}. The following approximate functional equation holds (see Section 2):
\begin{equation*}
L(\pi \otimes \chi, \beta) = \sum_{m=1}^\infty \frac{a_\pi (m) \chi (m)}{m^\beta} F_1 \left( \frac{my}{f_\pi p^{an}}\right) + \omega_\pi (p^a) \epsilon(0, \pi) \tau(\chi)^n (f_\pi p^{an})^{-\beta} \sum_{m=1}^\infty \frac{a_{\tilde{\pi}} (m) \overline{\chi} (mf_\pi')}{m^{1-\beta}} F_2 \left(\frac{m}{y}\right),
\end{equation*}
where $\chi$ is a character of conductor $p^a$ and $f_\pi'$ is the multiplicative inverse of $f_\pi$ modulo $p^a$. 

Define $x$ such that $xy = p^{an}$. Write 
\begin{equation}
S_{s/r}(p^a, \beta) = S_{1,s/r}(p^a, \beta) + S_{2, s/r} (p^a, \beta),
\end{equation}
where 
\begin{equation}
S_{1, s/r} (p^a, \beta) = p^{-a}  \sum\nolimits_{\chi \text{ mod }p^a}^* \sum_{m=1}^\infty \frac{a_\pi (m) \chi(ms'r)}{m^\beta} F_1 \left( \frac{m}{f_\pi x} \right)
\end{equation}
and 
\begin{equation}
\label{s2sr}
S_{2,s/r} (p^a, \beta) = p^{-a} \omega_\pi (p^a) \sum\nolimits_{\chi \text{ mod }p^a}^* \epsilon(0, \pi) \tau(\chi)^n (f_\pi p^{an})^{-\beta} \sum_{m=1}^\infty \frac{a_{\tilde{\pi}} (m) \overline{\chi} (m s' r f_\pi')}{m^{1-\beta}} F_2 \left(\frac{m}{y}\right).
\end{equation}

%Recall that the function $F_1(u)$ satisfies the following relations:
%\begin{enumerate}
%\item $F_1 (u) \ll_{m} u^{-m} \text{ for all } m\geq 1$
%\item $F_1 (u) = 1+ O(u^m) \text{ for all } m\geq 1$.  
%\end{enumerate}
Let 
\begin{equation}
\label{phisr}
Z_{s/r} (p^a, \beta) = \sum_{b \in S_a} \sum_{\substack{rm \equiv bs (p^a) \\ m \geq 1}} \frac{a_\pi (m)}{m^\beta} F_1 \left( \frac{m}{f_\pi x} \right).
\end{equation}
Then applying equation \eqref{ortho} gives
\begin{equation*}
S_{1, s/r} (p^a) = p^{-a} \sum_{m=1}^\infty \frac{a_\pi (m)}{m^\beta} F_1 \left( \frac{m}{f_\pi x} \right) \left[ p^{a-1} \delta_{S_a} (ms' r) - p^{a-2} \delta_{S_{a-1}} (ms' r) \right],
\end{equation*}
hence 
\begin{equation}
S_{1,s/r} = \frac{1}{p}\left[ Z_{s/r} (p^a, \beta) - p^{-1} Z_{s/r} (p^{a-1}, \beta) \right].
\end{equation}

%Look at 
%\begin{equation}
%Z_{s/r} (p^a, \beta) = \sum_{b \in S_a} \sum_{rm \equiv bs (p^a), m \geq 1} \frac{a_\pi (m)}{m^\beta} F_1 \left( \frac{m}%{f_\pi x} \right).
%\end{equation}

Assume $r|s$. First, consider the term in \eqref{phisr} with $b=1$ and $m = s/r$. This is a solution to the equation $rm \equiv bs (\text{mod } p^a)$ for all $a$. We will want to set the necessary condition for this to be the dominant contribution. Now if $m \neq s/r$, then $m = bs/r + kp^a$. If $k=0$ then $b \neq 1$ and since $b \in S_{a}$, it follows that $b \gg p^{a/(p-1)}$ which implies
\begin{equation*}
m \gg p^{a/(p-1)}.
\end{equation*}
If $k \neq 0$, then $m \ll k p^a$. 

Decompose
\begin{equation*}
Z_{s/r} (p^a, \beta) = \Sigma_{1,a} + \Sigma_{2,a},
\end{equation*}
where 
\begin{equation}
\Sigma_{1,a} =  \frac{a_\pi (s/r)}{(s/r)^\beta} F_1 \left( \frac{s}{r f_\pi x} \right)
\end{equation}
and 
\begin{equation}
\Sigma_{2,a} = \sum_{b \in S_a} \sum_{\substack{rm \equiv bs (p^a) \\ m \geq 1, m \neq s/r}} \frac{a_\pi (m)}{m^\beta} F_1 \left( \frac{m}{f_\pi x} \right).
\end{equation}
Since $F_1 \left( \frac{m}{f_\pi x} \right) = 1 + O \left( \frac{m}{f_\pi x} \right)$, 
\begin{equation}
\Sigma_{1,a} = \frac{a_\pi (s/r)}{(s/r)^\beta} \left( 1+ O \left( \frac{1}{x} \right) \right).
\end{equation}

Following \cite{LuoRam}, let 
\begin{equation}
b_{m,a} :=
\begin{cases} 
1 &\mbox{if } m = bs/r + k p^a\\
0 & \mbox{otherwise}. 
\end{cases}
\end{equation}
Then
\begin{equation}
\label{sumof2}
\Sigma_{2,a} \ll \Bigl| \sum_{\substack{1 \leq m \ll x^{1+\epsilon} \\ m \neq s/r}} \frac{a_\pi (m)}{m^\beta} b_{m,a} F_1 \left( \frac{m}{f_\pi x} \right) \Bigr| + \Bigl| \sum_{\substack{m \gg x^{1+\epsilon} \\ m \neq s/r}} \frac{a_\pi (m)}{m^\beta} b_{m,a} F_1 \left( \frac{m}{f_\pi x} \right) \Bigr|.
\end{equation}
Define 
\begin{equation*}
P_{2,a} =  \Bigl| \sum_{\substack{1 \leq m \ll x^{1+\epsilon} \\ m \neq s/r}} \frac{a_\pi (m)}{m^\beta} b_{m,a} F_1 \left( \frac{m}{f_\pi x} \right) \Bigr|
\end{equation*}
and
\begin{equation*}
Q_{2,a} = \Bigl| \sum_{\substack{m \gg x^{1+\epsilon} \\ m \neq s/r}} \frac{a_\pi (m)}{m^\beta} b_{m,a} F_1 \left( \frac{m}{f_\pi x} \right) \Bigr|.
\end{equation*}
Since $F_1 \left( \frac{m}{f_\pi x} \right) = 1 + O(x^\epsilon)$ for $m \ll x^{1+\epsilon}$, this gives 
\begin{equation}
\label{before}
P_{2,a} \ll x^\epsilon \Bigl| \sum_{\substack{1 \leq m \ll x^{1+\epsilon} \\ m \neq s/r}} \frac{a_\pi (m)}{m^\beta} b_{m,a}\Bigr|
\end{equation}
and since $F_1 \left( \frac{m}{f_\pi x} \right) \ll \frac{x^t}{m^t}$ for any integer $t$ and $m\gg x^{1+\epsilon}$,
\begin{equation}
\label{greater}
Q_{2,a} \ll x^t \Bigl| \sum_{\substack{m \gg x^{1+\epsilon}\\ m \neq s/r}} \frac{a_\pi (m)}{m^{\beta + t}} b_{m,a} \Bigr|.
\end{equation}

 If $\pi$ is tempered then from \eqref{before}
\begin{eqnarray}
P_{2,a} &\ll& x^\epsilon \sum_{\substack{1 \leq m \ll x^{1+\epsilon}\\ m \neq s/r}} m^{\epsilon - \beta} b_{m,a} \nonumber \\
&\ll& x^\epsilon \left( \sum_{k p^a \ll x^{1+\epsilon}} (kp^a)^{\epsilon - \beta} + (p^{a/(p-1)})^{\epsilon - \beta} \right) \nonumber \\
&\ll& x^\epsilon p^{a(\epsilon - \beta)} \sum_{k \ll \frac{x^{1+\epsilon}}{p^a}} k^{\epsilon - \beta} \nonumber \\
&\ll&  x^\epsilon p^{a(\epsilon - \beta)} \left( \frac{x^{1+\epsilon}}{p^a} \right)^{\epsilon - \beta +1} \nonumber \\
\label{pp}
&\ll& p^{-a} x^{1-\beta +\epsilon}.
\end{eqnarray}
Similarly, from \eqref{greater}
\begin{eqnarray}
Q_{2,a} &\ll& x^t \sum_{m \gg x^{1+\epsilon}} m^{\epsilon- \beta - t} b_{m,a} \nonumber \\
&\ll& x^t \sum_{k \gg \frac{x^{1+\epsilon}}{p^a}} (kp^a)^{\epsilon- \beta - t} \nonumber \\
&\ll& x^t p^{a(\epsilon - \beta - t)} \sum_{k \gg \frac{x^{1+\epsilon}}{p^a}} k^{\epsilon - \beta - t} \nonumber \\
&\ll& x^t p^{a(\epsilon - \beta - t)} \frac{x^{\epsilon - \beta - t + 1}}{p^{\epsilon - \beta - t +1}} \nonumber \\
\label{qq}
&\ll& p^{-a} x^{1 - \beta + \epsilon}.
\end{eqnarray}
From \eqref{sumof2}, \eqref{pp} and \eqref{qq}, if $\pi$ is tempered then
\begin{equation}
\Sigma_{2,a} \ll p^{-a} x^{1-\beta +\epsilon}.
\end{equation}
We want $\Sigma_{2,a} \to 0$ as $a \to \infty$. Substituting with $x=p^{an(1-\upsilon)}$ gives the condition
\begin{equation*}
-1 + (1-\upsilon)n(1-\beta + \epsilon) <0,
\end{equation*}
or equivalently
\begin{equation}
\label{s1_1}
\upsilon > 1 - \frac{1}{n(1-\beta +\epsilon)}.
\end{equation}

If $\pi$ is not tempered, applying Cauchy's inequality to equation \eqref{before} gives
\begin{equation*}
P_{2,a} \ll x^\epsilon \left( \sum_{1 \leq m \ll x^{1+\epsilon}} \frac{|a_\pi (m)|^2}{m^{2\beta}} \right)^{1/2} \cdot \left( \sum_{1 \leq m \ll x^{1+\epsilon}} b_{m,a}^2 \right)^{1/2}
\end{equation*}
hence
\begin{equation*}
P_{2,a} \ll \frac{x^{1/2 + \epsilon}}{p^{a/2}} \left( \sum_{1 \leq m \ll x^{1+\epsilon}} \frac{|a_\pi (m)|^2}{m^{2\beta}} \right)^{1/2}.
\end{equation*}
Split the sum over $m$ in dyadic intervals
\begin{equation*}
P_{2,a} \ll \frac{x^{1/2 + \epsilon}}{p^{a/2}} \left( \sum_{1 \leq i \ll (1+\epsilon) \log (x)} \sum_{m= 2^{i-1} + 1}^{2^i} \frac{|a_\pi (m)|^2}{m^{2\beta}} \right)^{1/2}.
\end{equation*}
By applying inequality \eqref{avg},
\begin{equation*}
P_{2,a} \ll \frac{x^{1/2 + \epsilon}}{p^{a/2}} \left( \sum_{1 \leq i \ll (1+\epsilon) \log (x)} \frac{2^{i+\epsilon}}{2^{2(i-1)\beta}} \right)^{1/2}
\end{equation*}
which gives
\begin{equation}
\label{pp2}
P_{2,a} \ll p^{-a/2} x^{1-\beta + \epsilon}.
\end{equation}

In equation \eqref{greater}, write $t= t_1 + t_2$, with $t_1, t_2$ large integers, and apply Cauchy's inequality:
\ba
Q_{2,a} &\ll& x^{t_1 + t_2} \left( \sum_{m \gg x^{1+\epsilon}} \frac{|a_\pi(m)|^2}{m^{2\beta+2 t_1}} \right)^{1/2} \left( \sum_{m \gg x^{1+\epsilon}} \frac{b_{m,a}^2}{m^{2t_2}} \right)^{1/2} \nn\\
&\ll& x^{t_1 + t_2} \left( \sum_{i \gg (1+\epsilon) \log (x)}  \sum_{2^{i-1} < m \leq 2^i} \frac{|a_\pi (m)|^2}{m^{2\beta + 2t_1}} \right)^{1/2} \left( \sum_{k \gg \frac{x^{1+\epsilon}}{p^a}} \frac{1}{(kp^a)^{2t_2}} \right)^{1/2} .
\ea
Using \eqref{avg}, 
\ba
Q_{2,a} &\ll& x^{t_1+ t_2} \left( \sum_{i \gg (1+\epsilon) \log (x)}  \frac{2^{i+\epsilon}}{2^{(i-1)(2\beta + 2t_1)}} \right)^{1/2} \left( \sum_{k \gg \frac{x^{1+\epsilon}}{p^a}} p^{-2at_2} \frac{1}{k^{2t_2}} \right)^{1/2} \nn\\
&\ll& x^{t_1+ t_2} \left(x^{1-2t_1 - 2\beta + \epsilon} \right)^{1/2} \left( p^{-2at_2} x^{1 -2t_2 +\epsilon} \right)^{1/2} \nn \\
\label{qq2} 
&\ll& p^{-at_2} x^{1-\beta+\epsilon} .
\ea

Putting \eqref{sumof2}, \eqref{pp2} and \eqref{qq2} together gives
\begin{equation}
\Sigma_{2,a} \ll p^{-a/2} x^{1- \beta + \epsilon}.
\end{equation}

Since we want $\Sigma_{2,a} \to 0$, we get the condition
\begin{equation*}
an (1-\upsilon) (1-\beta + \epsilon) < \frac{a}{2}.
\end{equation*}
This gives
\begin{equation}
\label{s1_2}
\upsilon > 1 - \frac{1}{2n(1-\beta +\epsilon)}.
\end{equation}
For $\upsilon$ as above, 
\begin{equation*}
\lim_{a \to \infty} S_{1,s/r} (p^a, \beta) = \frac{1}{p} \left[ \frac{a_\pi (s/r)}{(s/r)^\beta} - \frac{1}{p} \cdot \frac{a_\pi (s/r)}{(s/r)^\beta} \right],
\end{equation*}
hence 
\begin{equation}
\lim_{a \to \infty} S_{1,s/r} (p^a, \beta) = \frac{p-1}{p^2} \cdot \frac{a_\pi (s/r)}{(s/r)^\beta}.
\end{equation}

%We have 
%\begin{equation}
%S_{2,s/r} (p^a, \beta) = p^{-a} \omega_\pi (q) \sum\nolimits_{\chi \text{ mod }p^a}^* \epsilon(0, \pi) \tau(\chi)^n (f_\pi p^{an})^{-\beta} \sum_{m=1}^\infty \frac{a_{\tilde{\pi}} (m) \overline{\chi} (m s' r f_\pi')}{m^{1-\beta}} F_2 \left(\frac{m}{y}\right).
%\end{equation}

%The function $F_2(u)$ satisfies the following relations:
%\begin{enumerate}
%\item $F_2 (u) \ll_{m} u^{-m} \text{ for all } m\geq 1$, as $u\to \infty$
%\item $F_2(u) \ll_\epsilon 1+ u^{1-\beta_0(\tilde{\pi}) - \text{Re}(\beta)-\epsilon}$ for any $\epsilon>0$, where 
%\begin{equation*}
%0 \leq \beta_0(\pi) \leq \frac{1}{2} - \frac{1}{n^2+1}.
%\end{equation*}
%\end{enumerate}

In \eqref{s2sr} write
\begin{equation}
\label{ssum}
|S_{2,s/r}| \ll A_{2,s/r} + B_{2,s/r},
\end{equation}
where 
\begin{equation}
A_{2,s/r} = p^{-a} p^{-an\beta} \sum_{m\ll y^{1+\epsilon}} \left[ \frac{|a_{\tilde{\pi}} (m)|}{m^{1-\beta}}  F_2 \left( \frac{m}{y} \right) \Bigl| \sum\nolimits_{\chi (\text{mod }p^a)}^* \overline{\chi} (ms'r f_\pi') \tau^n (\chi) \Bigr| \right]
\end{equation}
and 
\begin{equation}
\label{b2}
B_{2,s/r} = p^{-a} p^{-an\beta} \sum_{m\gg y^{1+\epsilon}} \left[ \frac{|a_{\tilde{\pi}} (m)|}{m^{1-\beta}}  F_2 \left( \frac{m}{y} \right) \Bigl| \sum\nolimits_{\chi (\text{mod }p^a)}^* \overline{\chi} (ms'r f_\pi') \tau^n (\chi) \Bigr| \right].
\end{equation}

If $\pi$ is tempered then $|a_{\tilde{\pi}}(m)| \ll m^\epsilon$. Also, $F_2 \left( \frac{m}{y} \right) \ll 1 + \left( \frac{m}{y} \right)^{1-\beta - \epsilon}$ if $m \ll y^{1+\epsilon}$, which gives $F_2 \left( \frac{m}{y} \right) \ll y^{\epsilon(1-\beta)}$. Applying Lemma \ref{gauss}, 
\begin{equation*}
|A_{2,s/r}| \ll p^{-a} p^{-an\beta} p^{1/2 + a(n+1)/2} y^{\epsilon(1-\beta)} \sum_{m=1}^{y^{1+\epsilon}} m^{\epsilon + \beta - 1}
\end{equation*}
and since $\sum_{m=1}^{y^{1+\epsilon}} m^{\epsilon + \beta - 1} \ll y^{(1+\epsilon)(\epsilon + \beta)}$, 
\begin{equation}
\label{withram}
|A_{2,s/r}| \ll p^{-an\beta + a(n-1)/2} y^{\epsilon + \beta},
\end{equation}
for any $\epsilon >0$. 

Assume now $\pi$ is not tempered. By Cauchy's inequality and
\begin{equation*}
\sqrt{1+ (m/y)^2} \ll y^\epsilon \text{ for } m \ll y^{1+\epsilon}
\end{equation*}
we obtain
\begin{equation*}
|A_{2, s/r}| \ll p^{-a} p^{-an\beta} y^\epsilon \left( \sum_{m \ll y^{1+\epsilon}} \frac{|a_{\tilde{\pi}} (m)|^2}{m^{2-2\beta}} \right)^{1/2} \left( \sum_{m=-\infty}^\infty H \left( \frac{m}{y} \right) \Bigl| \sum\nolimits_{\chi \text{ mod } p^a}^* \overline{\chi} (ms'r f_\pi') \tau^n (\chi) \Bigr|^2 \right)^{1/2},
\end{equation*}
where 
\begin{equation*}
H(u):= \frac{1}{\pi (1+ u^2)}.
\end{equation*}

First, consider the term
\begin{equation}
C:= \sum_{m \ll y^{1+\epsilon}} \frac{|a_{\tilde{\pi}} (m)|^2}{m^{2-2\beta}}
\end{equation}
and split the sum in dyadic intervals
\begin{equation*}
C = \sum_{1 \leq i \ll (1+\epsilon) \log (y)} \sum_{m=2^{i-1} +1}^{2^i} \frac{|a_{\tilde{\pi}} (m)|^2}{m^{2-2\beta}}.
\end{equation*}
Applying \eqref{avg} it follows that
\begin{equation*}
C \ll \sum_{1 \leq i \ll (1+\epsilon) y} \frac{2^{i+\epsilon}}{2^{(i-1)(2-2\beta)}},
\end{equation*}
which implies that 
\begin{equation}
C \ll y^{2\beta  - 1 + \epsilon}.
\end{equation}
Hence,
\begin{equation}
\label{a2srwoD}
|A_{2, s/r}| \ll y^{\beta - 1/2 + \epsilon} p^{-a-an\beta} \left( \sum_{m=-\infty}^\infty H \left( \frac{m}{y} \right) \Bigl| \sum\nolimits_{\chi \text{ mod } p^a}^* \overline{\chi} (ms'r f_\pi') \tau^n (\chi) \Bigr|^2 \right)^{1/2}.
\end{equation}

Let's now look at 
\begin{equation}
D:= \sum_{m= -\infty}^\infty H \left( \frac{m}{y} \right) \Bigl| \sum\nolimits_{\chi \text{ mod } p^a}^* \overline{\chi} (ms'r f_\pi') \tau^n (\chi) \Bigr|^2. 
\end{equation}
We have 
\begin{equation*}
D = \sum\nolimits_{\chi \text{ mod } p^a}^* \sum\nolimits_{\psi \text{ mod } p^a}^* \Bigl| \tau^n(\chi) \tau^n(\overline{\psi}) \sum_{m=-\infty}^\infty \overline{\chi} \psi (ms'r f_\pi') H\left( \frac{m}{y} \right) \Bigr|
\end{equation*}
Following the general approach of \cite{Luo,Ward}, we consider the diagonal and off-diagonal contributions separately. Let's first compute the terms corresponding to $\chi = \psi$:
\begin{equation*}
\sum\nolimits_{\chi \text{ mod } p^a}^* \Bigl| \tau^n(\chi) \tau^n(\overline{\chi}) \sum_{m=-\infty}^\infty H\left( \frac{m}{y} \right) \Bigr| \ll p^{a+na} \sum_{m = -\infty}^\infty H \left( \frac{m}{y} \right)
\end{equation*}
since there are $\ll p^a$ primitive $p$-power characters and since $|\tau^n (\chi)| = p^{an/2}$ from the properties of the Gauss sum of a primitive character. Use the Fourier transform property that $\mathcal{F}\{g(xA)\} = \frac{1}{A} \hat{g} \left( \frac{\nu}{A} \right)$ for $A>0$ (see also \cite{Luo,Ward}) to get that
\begin{equation*}
\sum_{m = -\infty}^\infty H\left(\frac{m}{y}\right) =y\sum_{\nu =-\infty}^\infty T(y\nu).
\end{equation*}
Function $T(\nu)$ is the Fourier transform of $H(m)$ and is given by $T(\nu) = e^{-2\pi |\nu|}$, hence $\sum_{m \in \mathbb{Z}} H \left( \frac{m}{y} \right) \ll y$. Note we have used the Poisson summation formula. Thus the contribution to $D$ is
\begin{equation}
\ll p^{a +na} y. 
\end{equation}
For the terms in $D$ that have $\chi \neq \psi$, even if $\chi$ and $\psi$ are primitive the product $\overline{\chi}\psi$ can be non-primitive because the conductors are not relatively prime. We have that for $g:\mathbb{Z}/q\mathbb{Z} \to \mathbb{C}$:
\begin{equation*}
\sum_{m=-\infty}^\infty g(m) f \left( \frac{m}{q} \right) = \sum_{b \text{ mod } q} g(b) F \left( \frac{b}{q} \right) = \sum_{\nu = -\infty}^\infty \hat{g}(-\nu) \hat{f} (\nu)
\end{equation*}
where $F(x) = \sum_{\nu=-\infty}^\infty \hat{f}(\nu) e^{-2\pi i \nu x}$. Applying this in our case, 
\begin{equation*}
\sum_{m=-\infty}^\infty \overline{\chi} \psi(m) H \left( \frac{m}{y} \right) = \frac{y}{p^a} \sum_{\nu = -\infty}^\infty \left( \sum_{b \text{ mod } p^a} \overline{\chi} \psi(b) e^{-2\pi i \nu b/p^a} \right) T \left( \frac{y\nu}{p^a} \right).
\end{equation*}
The interior sum is $\ll p^a$ since the number of characters is $\ll p^a$, and for $\nu=0$ it is zero since $\overline{\chi} \psi$ is non-trivial. Thus,
\begin{equation*}
\Bigl| \sum_{m=-\infty}^\infty \overline{\chi}\psi(m) H \left( \frac{m}{y} \right) \Bigr| \ll y \sum_{\nu \in \mathbb{Z}, \nu \neq 0} T \left( \frac{y\nu}{p^a} \right). 
\end{equation*}
Note this relation also holds for the computation performed by \cite{Ward}.

Assuming $\upsilon > \frac{1}{n}$ (which will be part of our constraint) gives that $y/p^a \to \infty$. We have
\begin{equation*}
\sum_{\nu \in \mathbb{Z}, \nu \neq 0} T \left( \frac{y\nu}{p^a} \right) \asymp \frac{2}{e^{2\pi y p^{-a}} - 1} \ll \frac{1}{y}.
\end{equation*}
Putting everything together, these terms of $D$ contribute
\begin{equation}
\ll p^{2a + na}. 
\end{equation}
Thus, we conclude that the two contributions for $\chi = \psi$ and $\chi \neq \psi$ combined give
\begin{equation}
\label{dd}
D \ll p^{a+na} y. 
\end{equation}
From \eqref{a2srwoD} and \eqref{dd}, even if $\pi$ is not tempered,
\ba
|A_{2,s/r}| &\ll& y^{\beta - 1/2 + \epsilon} p^{-a - an\beta} p^{a/2 + na/2} y^{1/2} \nn\\
\label{aa}
&\ll& y^{\beta + \epsilon} p^{-an\beta + a(n-1)/2}.
\ea

For $m \gg y^{1+\epsilon}$, $F_2 \left( \frac{m}{y} \right) \ll \frac{y^t}{m^t}$ for any integer $t\geq 1$, and applying Cauchy's inequality in \eqref{b2} gives 
\begin{equation*}
|B_{2, s/r}| \ll p^{-a} p^{-an\beta} y^t \left( \sum_{m \gg y^{1+\epsilon}} \frac{|a_{\tilde{\pi}}|^2}{m^{2-2\beta + 2t}} \right)^{1/2} D^{1/2}.
\end{equation*}
Splitting the sum over $m$ in dyadic intervals and using \eqref{avg},
\ba
|B_{2, s/r}| &\ll& p^{-a} p^{-an\beta} y^t \left( \sum_{i \gg (1+\epsilon) \log(y)} \frac{2^{i+\epsilon}}{2^{(i-1)(2-2\beta+2t)}} \right)^{1/2} D^{1/2} \nn \\
&\ll& p^{-a} p^{-an\beta} y^t \left( y^{2\beta - 2t -1 + \epsilon} \right)^{1/2} D^{1/2}.
\ea
Using the bound in \eqref{dd} gives
\begin{equation}
\label{bb}
|B_{2,s/r}| \ll y^{\beta + \epsilon} p^{-an\beta + a(n-1)/2}.
\end{equation}

From \eqref{ssum}, \eqref{aa} and \eqref{bb} we conclude that
\begin{equation}
\label{secondsum}
|S_{2,s/r}| \ll y^{\beta + \epsilon} p^{-an\beta + a(n-1)/2}.
\end{equation}
We want $S_{2,s/r} \to 0$ as $a\to \infty$. Taking $y=p^{an\upsilon}$ in \eqref{secondsum} gives the condition
\begin{equation*}
-\frac{1}{2} + n \left( \frac{1}{2} - \beta \right) + n \upsilon (\beta + \epsilon) < 0, 
\end{equation*}
which implies
\begin{equation}
\label{s2}
\upsilon < \frac{1 - n + 2n\beta}{2n(\beta + \epsilon)}.
\end{equation}

If $\pi$ is tempered then we need to check that $\upsilon$ satisfies conditions \eqref{s1_1} and \eqref{s2}. Thus, for a general $n$, taking $\epsilon \to 0$ the desired condition is
\begin{equation*}
1 - \frac{1}{n(1-\beta)} < \frac{1+n(2\beta -1)}{2n\beta},
\end{equation*}
or equivalently 
\begin{equation}
\beta > \frac{n-1}{n+1}.
\end{equation}

If $\pi$ is not tempered, then conditions \eqref{s1_2} and \eqref{s2} need to be satisfied. Thus, taking $\epsilon \to 0$ gives the condition
\begin{equation*}
1 - \frac{1}{2n(1-\beta)} < 1 + \frac{1-n}{2n\beta},
\end{equation*}
which implies 
\begin{equation*}
\beta > \frac{n-1}{n}.
\end{equation*}

\end{proof}

\begin{proof}[Proof of Corollary 1]
Take $s=r=1$ in Theorem 4 and use the functional equation. Note that if $\beta >1$, $L(\pi \otimes \chi, \beta)$ has an Euler product expansion and hence is nonvanishing.
\end{proof}

\section{Determination of GL(3) cusp forms}

Let $\pi \in \mathcal{A}_u(3)$ be an isobaric sum of unitary cuspidal automorphic representations of $\text{GL}(3,\mathbb{A}_\mathbb{Q})$. The local components $\pi_l$ are determined by the set of nonzero complex numbers $\{ \alpha_l, \beta_l, \gamma_l\}$, which we represent by the diagonal matrix $A_l(\pi)$. 

The $L$-factor of $\pi$ at a prime $l$ is given by 
\begin{equation}
L(\pi_l, s) = \text{det} (I- A_l (\pi) l^{-s})^{-1} = \prod_{j=1}^n (1- \alpha_l l^{-s})^{-1} (1- \beta_l l^{-s})^{-1} (1- \gamma_l l^{-s})^{-1}. 
\end{equation}
Let $S_0 =\{ l : \pi_l \text{ unramified and tempered}\}$, and let $S_1 =\{ l: \pi_l \text{ is ramified}\}$. Note that $S_1$ is finite. Take the union
\begin{equation*}
S = S_0 \cup S_1 \cup \{ \infty \}.
\end{equation*}
Since $\pi$ is unitary, $\pi_l$ is tempered iff $|\alpha_l|=|\beta_l|=|\gamma_l| =1$.

\begin{lem}
If $l \not\in S$ then 
\begin{equation}
A_l(\pi) =\{ u l^t, u l^{-t}, w \}.
\end{equation} 
with $|u|=|w|=1$. If $l \in S_0$ then 
\begin{equation*}
A_l (\pi) = \{ \alpha, \beta, \gamma \}
\end{equation*} 
with $|\alpha|=|\beta|=|\gamma|=1$. 
\end{lem}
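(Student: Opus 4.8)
The plan is to read off the Satake parameters from two global inputs, avoiding the local classification of the unitary dual. Since $\pi\in\mathcal{A}_u(3)$, its central character $\omega_\pi$ is unitary, and moreover $\tilde\pi$ is isomorphic to the complex-conjugate representation $\bar\pi$: the contragredient commutes with the isobaric sum, so $\tilde\pi=\tilde\sigma_1\boxplus\cdots\boxplus\tilde\sigma_m$ where the $\sigma_i$ are the unitary cuspidal constituents of $\pi$, and $\tilde\sigma_i\cong\bar\sigma_i$ for each unitary cuspidal $\sigma_i$ (the invariant Hermitian pairing identifies $\bar\sigma_i$ with $\tilde\sigma_i$), so $\tilde\pi\cong\bar\sigma_1\boxplus\cdots\boxplus\bar\sigma_m=\bar\pi$. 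Consequently, at any finite prime $l$ at which $\pi_l$ is unramified, the diagonal Satake matrix $A_l(\pi)=\{\alpha,\beta,\gamma\}$ satisfies $|\alpha\beta\gamma|=1$ (unitarity of $\omega_\pi$) and $\{\alpha^{-1},\beta^{-1},\gamma^{-1}\}=\{\bar\alpha,\bar\beta,\bar\gamma\}$ as multisets (since $\widetilde{\pi_l}\cong\overline{\pi_l}$ and unramified representations are determined by their Satake parameters).

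First I would constrain the absolute values. Writing $|\alpha|=l^{a}$, $|\beta|=l^{b}$, $|\gamma|=l^{c}$, the two relations give $a+b+c=0$ and $\{-a,-b,-c\}=\{a,b,c\}$; sorting the two multisets in increasing order and comparing entries shows the only solutions are $\{a,b,c\}=\{0,0,0\}$ and $\{a,b,c\}=\{t,-t,0\}$ for some $t>0$. If $l\in S_0$, then $\pi_l$ is tempered and unramified, so by the equivalence recalled just before the statement (equivalently, since the second alternative is then excluded) we get $|\alpha|=|\beta|=|\gamma|=1$, which is the second assertion of the lemma. If $l\notin S$, then $l$ is finite, $\pi_l$ is unramified but not tempered, so the first alternative is impossible; relabelling, we may assume $|\alpha|=l^{t}$, $|\beta|=l^{-t}$, $|\gamma|=1$ with $t>0$.

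Next I would recover the phases from the multiset identity $\{\alpha^{-1},\beta^{-1},\gamma^{-1}\}=\{\bar\alpha,\bar\beta,\bar\gamma\}$. The element $\alpha^{-1}$ has absolute value $l^{-t}$, whereas $\bar\alpha$ and $\bar\gamma$ have absolute values $l^{t}\neq l^{-t}$ and $1\neq l^{-t}$ (using $t>0$); hence necessarily $\alpha^{-1}=\bar\beta$, i.e.\ $\beta=1/\bar\alpha$. Put $u:=\alpha\, l^{-t}$, so that $|u|=1$ and $\alpha=u\,l^{t}$; since $\bar u=u^{-1}$ and $l^{t}$ is real and positive, $\beta=1/\bar\alpha=u\,l^{-t}$. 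Setting $w:=\gamma$ we have $|w|=1$, and therefore $A_l(\pi)=\{u\,l^{t},\,u\,l^{-t},\,w\}$ with $|u|=|w|=1$, as claimed.

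The computation is routine once the inputs are in place; the step carrying the real content is the identification $\tilde\pi\cong\bar\pi$, which is precisely what forces the two "non-tempered" Satake parameters to share the common phase $u$ (the shape of a $\mathrm{GL}(2)$ complementary-series pair) rather than merely having absolute values $l^{\pm t}$. One could alternatively invoke the classification of the spherical generic unitary dual of $\mathrm{GL}(3,\mathbb{Q}_l)$ — every such representation is tempered or is parabolically induced from $\eta|\det|^{t}\boxtimes\eta|\det|^{-t}$ on $\mathrm{GL}(2)$ times an unramified unitary character of $\mathrm{GL}(1)$ — after first checking that $\pi_l$ is generic (a product of unitary generic representations over a $p$-adic field is irreducible and generic, and the cuspidal constituents of $\pi$ have locally generic components); but the argument above sidesteps this and applies uniformly to all isobaric types occurring for $n=3$.
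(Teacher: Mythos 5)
Your proof is correct and takes essentially the same route as the paper's: the paper also argues purely from unitarity, which gives $\{\bar\alpha_l,\bar\beta_l,\bar\gamma_l\}=\{\alpha_l^{-1},\beta_l^{-1},\gamma_l^{-1}\}$ as multisets, and then matches parameters to produce the pair $ul^{t},ul^{-t}$ and the unit $w$. Your derivation of that multiset identity via $\tilde\pi\cong\bar\pi$ on the cuspidal constituents, and your matching by absolute values instead of the paper's ``without loss of generality,'' merely make the same argument slightly more explicit.
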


\begin{proof}
Suppose first that $l \not\in S$. Then assume that $|\alpha_l| \neq 1$. Take $\alpha_l = u l^t$, with $|u|=1$ complex and $t \neq 0$ real. By unitarity, 
\begin{equation*}
\{ \overline{\alpha}_l, \overline{\beta}_l, \overline{\gamma}_l\} = \{ \alpha_l^{-1}, \beta_l^{-1}, \gamma_l^{-1}\}.
\end{equation*} 
Clearly $\overline{\alpha}_l \neq \alpha_l^{-1}$. Without loss of generality, take $\beta_l^{-1} = \overline{\alpha}_l$. Hence, this gives $\beta_l = u \cdot l^{-t}$. So, we must have $\overline{\gamma}_l = \gamma_l^{-1}$, hence $\gamma_l = w$ with $|w|=1$.  Hence
\begin{equation*}
A_l (\pi) = \{ u l^t, u l^{-t}, w\}
\end{equation*}
with $|u|=|w|=1$. 

Now suppose that $l \in S_0$. By unitarity $|\alpha_l| = |\beta_l| = |\gamma_l|=1$.
\end{proof}

\begin{proof}[Proof of Theorem 2]
Let $T = \{ l | \pi_l \text{ or } \pi'_l \text{ is ramified} \}$. This is a finite set. 

Consider $l \not\in T$ an arbitrary finite place with $l \neq p$. Let $A_l (\pi) =\{ \alpha_l, \beta_l, \gamma_l\}$ and $A_l (\pi') =\{ \alpha'_l, \beta'_l, \gamma'_l\}$. 

Applying Theorem 4, $a_\pi (n) = B^a C a_{\pi'} (n)$ for all $(n, p)=1$ and all but finitely many $a$. Since $a_\pi (1) = a_{\pi'} (1)$, then $B=C=1$. Thus, $a_\pi(l) = a_{\pi'} (l)$. 

We want to show that $A_l(\pi) = A_l(\pi')$. Indeed, 
\begin{equation}
\alpha_l + \beta_l + \gamma_l = \alpha'_l + \beta'_l + \gamma'_l
\end{equation}
and since $\pi$ and $\pi'$ have the same central character
\begin{equation}
\alpha_l \beta_l \gamma_l = \alpha'_l \beta'_l \gamma'_l. 
\end{equation}

To show that $\{\alpha_l\, \beta_l, \gamma_l\} = \{\alpha'_l, \beta'_l, \gamma'_l\}$, by Vieta's  formulas and the above two relations, it is enough to check that 
\begin{equation*}
\alpha_l \beta_l + \alpha_l \gamma_l + \beta_l \gamma_l = \alpha'_l \beta'_l + \alpha'_l \gamma'_l + \beta'_l \gamma'_l.
\end{equation*}

Suppose $A_l (\pi) = \{ul^t, ul^{-t}, w\}$ with $|u|=|w|=1$. Then
\begin{equation*}
\alpha_l \beta_l + \alpha_l \gamma_l + \beta_l \gamma_l = u^2 + uw(l^t + l^{-t}) = \frac{1}{u^2} + \frac{1}{uw} (l^t + l^{-t}) = \frac{w+u(l^t + l^{-t})}{u^2 w},
\end{equation*}
hence $\alpha_l \beta_l + \alpha_l \gamma_l + \beta_l \gamma_l = \frac{\alpha_l + \beta_l + \gamma_l}{\alpha_l \beta_l \gamma_l}$. 

Now suppose that $A_l (\pi) =\{ \alpha_l, \beta_l, \gamma_l\}$ with $|\alpha_l|=|\beta_l|=|\gamma_l|=1$. Then
\begin{equation*}
\alpha_l \beta_l + \alpha_l \gamma_l + \beta_l \gamma_l = \frac{1}{\alpha_l \beta_l} + \frac{1}{\alpha_l \gamma_l} + \frac{1}{\beta_l \gamma_l} = \frac{\alpha_l + \beta_l + \gamma_l}{\alpha_l \beta_l \gamma_l}.
\end{equation*}

Thus, this implies that whenever $\alpha_l + \beta_l + \gamma_l = \alpha'_l + \beta'_l + \gamma'_l$ and $\alpha_l \beta_l \gamma_l = \alpha'_l \beta'_l \gamma'_l$, we obtain that $\alpha_l \beta_l + \alpha_l \gamma_l + \beta_l \gamma_l = \alpha'_l \beta'_l + \alpha'_l \gamma'_l + \beta'_l \gamma'_l$.

We have thus shown that for $l \not\in T\cup \{p\} \cup \{ \infty\}$, $A_l (\pi) = A_l (\pi')$, hence $\pi_l \cong \pi'_l$. Since $T \cup \{p\} \cup \{\infty\}$ is a finite set, this implies that $\pi \cong \pi'$ by the Generalized Strong Multiplicity One Theorem. 
\end{proof}

Let $\pi$ be a unitary cuspidal automorphic representation of $\text{GL}(2, \mathbb{A}_\mathbb{Q})$ with $A_l (\pi) = \{ \alpha_l , \beta_l \}$. At an unramified place $l$, it has $a_l = \alpha_l + \beta_l$ and central character $\omega (\varpi_l) = \alpha_l \beta_l$, with $\varpi_l$ the uniformizer at $l$. There exists an isobaric automorphic representation $Ad(\pi)$ of $\text{GL}(3, \mathbb{A}_\mathbb{Q})$ (cf. \cite{GelJac}) such that at an unramified place $l$,
\begin{equation*}
a_l (Ad(\pi)) = \alpha_l /\beta_l + \beta_l / \alpha_l +1.
\end{equation*}

\begin{proof}[Proof of Theorem 3]
Theorem 2 implies that $Ad(\pi) \cong Ad(\pi')$. Then, by Theorem 4.1.2 in \cite{Ram}, we deduce that since $\pi$ and $\pi'$ have the same central character, there exists a quadratic character $\nu$ such that $\pi \cong \pi' \otimes \nu$.  
\end{proof}

\section{Complex adjoint $L$-functions}

We give a short overview of the theory of complex adjoint $L$-functions associated to an elliptic curve $E/\mathbb{Q}$. This section is mostly review and can be skipped by experts.

Let $E/\mathbb{Q}$ be an elliptic curve with conductor $N$ given by a global minimal Weierstrass equation over $\mathbb{Z}$:
\begin{equation}
\label{weier}
y^2 + a_1 xy + a_3 y = x^3 + a_2 x^2 + a_4 x + a_6. 
\end{equation}
Define the complex $L$-function of $E$ by the Euler product for $\text{Re}(s)> \frac{3}{2}$:
\begin{equation*}
L(E,s) = \prod_{r|N} \frac{1}{1-a_r r^{-s}} \prod_{r \not \ | N} \frac{1}{1-a_r r^{-s} + r^{1-2s}}
\end{equation*}
where $a_r = r+1 - \# E(\mathbb{F}_r)$ if $r \!\! \not| N$. If $r|N$ then $a_r$ depends on the reduction of $E$ at $r$ in the following way: $a_r =1$ if $E$ has split multiplicative reduction at $r$, $a_r = -1$ if $E$ has non-split multiplicative reduction at $r$ and $a_r =0$ if $E$ has additive reduction at $r$. In this paper, we will only consider primes $r$ such that $E$ has semistable reduction at $r$.

Let $f $ be the holomorphic newform of weight 2 and level $N$ associated to $E$. The Fourier coefficients $c_r$ of $f$ at $r \!\!\not| N$ prime coincide with the coefficients $a_r$ in the Euler product of $E$ and the $L$-function of $E$ is given by
\begin{equation*}
L(E,s) = \sum_{n=1}^\infty c_n n^{-s}.
\end{equation*}
If $\Lambda (E, s) = N^{s/2} (2\pi)^{-s}  \Gamma(s) L(E,s)$, then the following functional equation is satisfied:
\begin{equation*}
\Lambda (E,s) = \pm \Lambda (E, 2-s),
\end{equation*}
where the sign varies, depending on $E$. 
If we associate to $f$ a unitary cuspidal automorphic form $\pi$ of $\text{GL}(2, \mathbb{A}_{\mathbb{Q}})$ with trivial central character and conductor $N$ then we want to have $L(\pi, s)$ unitarily normalized by setting
\begin{equation*}
L_u(\pi, s) = (2\pi)^{-s -1/2} L\left(E, s+ \frac{1}{2} \right). 
\end{equation*}
By \cite{GelJac}, there exists $Sym^2(\pi)$ an isobaric representation of $\text{GL}(3, \mathbb{A}_\mathbb{Q})$. It is cuspidal only in the case when $E$ doesn't have complex multiplication. 
 
An elliptic curve $E$ over $\mathbb{Q}$ is of CM-type if $\text{End}(E) \otimes \mathbb{Q} = K$, with $K=\mathbb{Q}(\sqrt{-D})$ an imaginary quadratic number field. We have that $L(E,s) = L(\eta, s - 1/2)$ for some unitary Hecke character $\eta$ of the idele class group $C_K$. We can associate a newform $f$ of weight 2 and level $N$ such that $L(f,s) = L( \eta, s)$. Note that $N$ is the norm $N_{K/\mathbb{Q}}$ of the product of the different $\mathcal{D}_{K/\mathbb{Q}}$ and the conductor of $\eta$. The central character $\omega$ of $f$ is $\omega = \eta_0 \delta$, where $\eta_0$ is the restriction of $\eta$ to $\mathbb{Q}$ and $\delta$ the quadratic character associated to $K$. 

A representation induced by character $\eta$ as above is called dihedral. If $\pi = I_{K}^{\mathbb{Q}} (\eta)$ then 
\begin{equation*}
L(I_K^{\mathbb{Q}} (\eta), s) = L(\eta, s).
\end{equation*}

We can identify $\eta$ with a character of the Weil group $W_K$ by the isomorphism $C_K \cong W_K^{ab}$. There is a two-dimensional irreducible representation $\rho : W_\mathbb{Q} \to \text{GL}(V)$ such that 
\begin{equation*}
L(\pi, s) = L(\rho, s),
\end{equation*}
with $\rho$ induced from a character of $W_K$ and the associated $L$-function defined as in \cite{Tate}.  By the above identification, we write $\rho = I_{W_{K}}^{W_\mathbb{Q}} (\eta)$ (see also \cite{Krishnamurthy}).

Let $\pi$ be a (unitary) cuspidal automorphic representation of $\text{GL}(2, \mathbb{A}_{\mathbb{Q}})$. Suppose $\pi$ is dihedral, of the form $I_{K}^{\mathbb{Q}} (\eta)$ for a (unitary) character $\eta$ of $C_K$. Let $\tau$ be the non-trivial automorphism of the degree 2 extension $K/\mathbb{Q}$. Note that 
\begin{equation}
\label{inv}
\eta \eta^{\tau} = \eta_0 \circ N_{K/\mathbb{Q}},
\end{equation}
where $\eta_0$ is the restriction of $\eta$ to $\mathbb{Q}$.
We have, 
\begin{equation}
\label{isobaric}
I_K^{\mathbb{Q}} (\eta \eta^\tau) \cong \eta_0 \boxplus \eta_0 \delta
\end{equation}
where $\delta$ is the quadratic character of $\mathbb{Q}$ associated to $K/\mathbb{Q}$. 

If $\lambda, \mu$ are characters of $C_K$, then by applying Mackey:
\begin{equation}
\label{mackey}
I_K^{\mathbb{Q}} (\lambda) \boxtimes I_K^{\mathbb{Q}} (\mu) \cong I_K^{\mathbb{Q}}  (\lambda \mu) \boxplus I_K^{\mathbb{Q}} (\lambda \mu^\tau). 
\end{equation}
Taking $\lambda = \mu = \eta$ in \eqref{mackey} and using \eqref{inv} and \eqref{isobaric},
\begin{equation*}
\pi \boxtimes \pi \cong I_K^{\mathbb{Q}} (\eta^2) \boxplus \eta_0 \boxplus \eta_0 \delta. 
\end{equation*}
Since $\pi \boxtimes \pi = Sym^2 (\pi) \boxplus \omega$ with $\omega = \eta_0 \delta$,
\begin{equation}
Sym^2 (\pi) \cong I_K^{\mathbb{Q}} (\eta^2) \boxplus \eta_0. 
\end{equation}
Denote by $\pi'$ the cuspidal automorphic representation  $I_K^{\mathbb{Q}} (\eta^2)$ of $\text{GL}(2, \mathbb{A}_\mathbb{Q})$. We have
\begin{equation*}
L(Sym^2 \pi, s) = L(\pi', s) L(\eta_0, s). 
\end{equation*}
Twisting by some character $\chi$ gives
\begin{equation*}
L(Sym^2 \pi \otimes \chi, s) = L(\pi' \otimes \chi, s) L(\eta_0 \otimes \chi, s).
\end{equation*}
Note that $L(\pi' \otimes \chi, s) L(\eta_0 \otimes \chi, s)$ is entire unless $\chi = \eta_0^{-1}$ in which case
\begin{equation*}
L(Sym^2 \pi \otimes \eta_0^{-1}, s) = L(\pi' \otimes \eta_0^{-1}, s) \zeta (s)
\end{equation*}
has a pole at $s=1$. Hence, $L(Sym^2 \pi \otimes \chi, s)$ is entire for $\chi \neq \eta_0^{-1}$. 

More generally, we define the complex $L$-function associated to the symmetric square of an elliptic curve in the following way (cf. \cite{CoatesSch}). Let $l$ be an odd prime number. Take $E[l^n]$ to be the $l^n$-torsion and
\begin{equation*}
T_l (E) = \varprojlim E[l^n]
\end{equation*}
to be the $l$-adic Tate module of $E$. Consider the $V_l (E) = T_l (E) \otimes_{\mathbb{Z}_l} \mathbb{Q}_l$, which is 2-dimensional over $\mathbb{Q}_l$. There is a continuous natural action of $\text{Gal}(\overline{\mathbb{Q}}/\mathbb{Q})$ on $V_l$. Let $\Sigma_l (E) = Sym^2 H_l^1(E)$, where $H_l^1 (E) = \text{Hom}(V_l(E), \mathbb{Q}_l)$. Consider the representation
\begin{equation}
\label{l-adic}
\rho_l : \text{Gal}(\overline{\mathbb{Q}}/\mathbb{Q}) \to \text{Aut}(\Sigma_l (E)).
\end{equation}

The $L$-function of $Sym^2(E)$ is given by the Euler product 
\begin{equation}
L(Sym^2 E, s) = \prod_{r\ \text{prime}} P_r (r^{-s})^{-1}
\end{equation}
in the half-plane $\text{Re}(s) >2$. The polynomial $P_r (X)$ is 
\begin{equation}
P_r (X):= \text{det}(1-\rho_l(\text{Frob}_r^{-1})X |\Sigma_l (E)^{I_r}), \ l \neq r,
\end{equation}
with $I_r$ the inertia subgroup of $\text{Gal}(\overline{\mathbb{Q}_r}/\mathbb{Q}_r)$ and $\text{Frob}_r$ an arithmetic Frobenius element at $r$. By the N\'{e}ron-Ogg-Shafarevich criterion we have that
\begin{equation*}
P_r(X) = (1- \alpha_r^2 X)(1- \beta_r^2 X) (1-rX)
\end{equation*}
when $E$ has good reduction at $r$ (see \cite{CoatesSch}). The elements $\alpha_r$ and $\beta_r$ are the roots of the polynomial
\begin{equation*}
X^2 - a_r X + r
\end{equation*}
with $a_r$ the trace of Frobenius at $r$.

Let $L(Sym^2 E, \chi, s)$ denote the $L$-function associated to the twist of the $l$-adic representations by a Dirichlet character $\chi$. Note that $L(Sym^2 E, \chi, 1) =0$ for $\chi$ odd (cf. \cite{Dabro}). The critical points for $Sym^2 E$ are $s=1$ and $s=2$.

Let $\chi$ be a primitive even Dirichlet character with conductor $c_{\chi}$. Let $C$ denote the conductor of the $l$-adic representation \eqref{l-adic}. If $\tau(\chi)$ is the Gauss sum of character $\chi$, define
\begin{equation*}
W(\chi) = \chi(C) c_\chi^{1/2} \frac{\tau(\chi)}{\tau(\overline{\chi})^2}. 
\end{equation*}
Then, by Theorem 2.2 in \cite{CoatesSch}, which is based on results in \cite{GelJac}, if the conductor $N$ of $E$ satisfies $(c_\chi, N) =1$, then 
\begin{equation*}
\Lambda (Sym^2 E, \chi, s) = (C\cdot c_\chi^3)^{s/2} (2\pi)^{-s} \pi^{- \frac{s}{2}} \Gamma (s) \Gamma \left( \frac{s}{2} \right)  L(Sym^2 E, \chi, s)
\end{equation*}
has a holomorphic continuation over $\mathbb{C}$ and satisfies the functional equation
\begin{equation}
\label{functional}
\Lambda (Sym^2 E, \chi, s) = W(\chi) \Lambda(Sym^2 E, \overline{\chi}, 3-s).
\end{equation}

\section{Adjoint $p$-adic $L$-functions}

Fix $p$ an odd prime and let $E$ be an elliptic curve over $\mathbb{Q}$ with semistable reduction at $p$. We start this section by giving a constructing of a $p$-adic analogue to $L(Sym^2 E, s)$ by the Mellin transform of a $p$-adic measure $\mu_p$ on $\mathbb{Z}_p^\times$. We follow the approach in \cite{Dabro}.

Suppose first that $E$ has good reduction at $p$. Let $G_p = \text{Gal}(\mathbb{Q}(\zeta_{p^\infty})/\mathbb{Q})$ with $G_p \cong \mathbb{Z}_p^\times$ and let $X_p$ be the set of continuous characters of $G_p$ into $\mathbb{C}_p^\times$. For $\chi \in X_p$, let $p^{m_\chi}$ be the conductor of $\chi$. 

Consider the real and imaginary periods of a N\'{e}ron differential $\omega_E = \frac{\text{d}x}{2y+a_1 x +a_3}$ of a minimal Weierstrass equation for $E$ over $\mathbb{Z}$ such as \eqref{weier}, which we denote by $\Omega^{\pm}(E)$. Let 
\begin{equation*}
\Omega^{+} (Sym^2 E(1)):= (2\pi i)^{-1} \Omega^+ (E) \Omega^- (E) \text{ and } \Omega^+(Sym^2 E(2)):= 2\pi i \Omega^+ (E) \Omega^- (E)
\end{equation*}
be the periods for $Sym^2 E$ at the critical twists.

Note that since $\mathbb{Z}_p^\times \cong (1+p \mathbb{Z}_p) \times (\mathbb{Z}/p)^\times$, we can write $X:=X_p$ as the product of $X((\mathbb{Z}/p)^\times)$ with $X_0=X(1+p\mathbb{Z}_p)$. The elements of $X_0$ are called wild $p$-adic characters. Note that by Section 2.1 in \cite{Visik} we can give $X_0$ a $\mathbb{C}_p$-structure through the isomorphism of $X_0$ to the disk
\begin{equation} 
\label{padicdisk}
U:=\{ u \in \mathbb{C}_p^\times| | u -1 |<1\}
\end{equation}
constructed by mapping $\nu \in X_0$ to $\nu(1+p)$, with $1+p$ a topological generator of $1+p\mathbb{Z}_p$.

We follow the definition of the distribution $\mu_p(\Omega^{+} (Sym^2 E(\cdot)))$ on $G_p$ in \cite{Dabro}. Let $\chi \in X_0$ be a wild $p$-adic character, with conductor $p^{m_\chi}$ which can be identified with a primitive Dirichlet character. Then
\begin{equation}
\label{at1}
\int_{\mathbb{Z}_p^\times} \chi d\mu_p (\Omega^+(Sym^2 E(1))) := \alpha_p (E)^{-2m_\chi} \cdot \tau(\overline{\chi}) \cdot \frac{L(Sym^2 E, \chi, 1)}{\Omega^+ (Sym^2 E(1))}
\end{equation}
and 
\begin{equation}
\label{at2}
\int_{\mathbb{Z}_p^\times} \chi d\mu_p (\Omega^+ (Sym^2 E(2))):= \begin{cases} \alpha_p (E)^{-2m_\chi} \cdot \tau(\overline{\chi})^2 p^{m_\chi} \cdot \frac{L(Sym^2 E, \chi, 2)}{\Omega^+ (Sym^2 E(2))} & \chi \text{ even,} \\
0 & \chi \text{ odd}. \end{cases}
\end{equation}

By \cite{Dabro}, if $E$ has good ordinary reduction at $p$ then the distributions $\mu_p (\Omega^+ (Sym^2 E(\cdot)))$ are bounded measures on $G_p$. If $E$ has supersingular reduction at $p$ then the distributions $\mu_p(\Omega^+(Sym^2 E(\cdot)))$ give $h$-admissible measures on $G_p$, with $h=2$. Note that the set of $h$-admissible measures with $h=1$ is larger, but contains the bounded measures.

Now suppose that $E$ has bad multiplicative reduction at $p$ (either split or non-split). We define distributions $\mu_p(\Omega^{+}(Sym^2 E(\cdot)))$ on $G_p$ as in \cite{Dabro}. Let $\chi \in X_0$ denote a Dirichlet character of conductor $p^{m_\chi}$. Then
\begin{equation}
\label{at1bad}
\int_{\mathbb{Z}_p^\times} \chi d\mu_p (\Omega^+(Sym^2 E(1))) := \tau(\overline{\chi}) \cdot \frac{L(Sym^2 E, \chi, 1)}{\Omega^+ (Sym^2 E(1))}
\end{equation}
if $\chi$ is non-trivial, otherwise set the above integral to zero for the trivial character, and 
\begin{equation}
\label{at2bad}
\int_{\mathbb{Z}_p^\times} \chi d\mu_p (\Omega^+ (Sym^2 E(2))):= \begin{cases} \tau(\overline{\chi})^2 p^{m_\chi} \cdot \frac{L(Sym^2 E, \chi, 2)}{\Omega^+ (Sym^2 E(2))} & \chi \text{ even and non-trivial,} \\
0 & \chi \text{ odd or trivial}. \end{cases}
\end{equation}
By \cite{Dabro}, if $E$ has bad multiplicative reduction at $p$, then the distributions $\mu_p(\Omega^+(Sym^2 E(\cdot)))$ are bounded measures on $G_p$. 

Consider $\mu$ an $h$-admissible measure as above. Then
\begin{equation}
\label{Lmu}
\chi \to L_\mu (\chi):= \int_{\mathbb{Z}_p^\times} \chi d\mu
\end{equation}
is an analytic function of type $o(\log^h)$ (cf. \cite{Visik}). Note that for an analytic function $F$ to be of type $o(\log^h)$ it must satisfy
\begin{equation*}
\sup_{|u-1|_p <r} \| F(u)\| = o \left( \sup_{|u-1|_p < r} |\log_p^h (u)| \right) \text{ for } r\to 1_{-}.
\end{equation*}
An $h$-admissible measure $\mu$ is determined by the values $L_\mu(\chi x_p^r)$, where $\chi$ is a wild $p$-adic character and $x_p$ is the $p$-th cyclotomic character given by the action on the $p$-power roots of unity, with $r=0, 1, \dotsb h-1$. 

Consider the $p$-adic distribution $\mu= \mu_p(\Omega^+(Sym^2 E(2)))$ as defined above. Denote by $L_p$ the corresponding $p$-adic $L$-function. We have
\begin{equation*}
L_p(Sym^2 E, \chi, s) := \int_{\mathbb{Z}_p^\times} \chi(x) \langle x \rangle^s d\mu. 
\end{equation*}
where $\langle \cdot \rangle : \mathbb{Z}_p^\times \to 1 + p\mathbb{Z}_p$, $\langle x \rangle = \frac{x}{\omega(x)}$, with $\omega: \mathbb{Z}_p^\times \to \mathbb{Z}_p^\times$ the Teichm\"{u}ller character.  

Let's first prove the following lemma:

\begin{lem}
Let $p$ be an odd prime. Let $E, E'$ be elliptic curves over $\mathbb{Q}$ with semistable reduction at $p$ such that $L_p(Sym^2 E, n) = C L_p (Sym^2 E', n)$, for an infinite number of integers $n$ prime to $p$ in some set $Y$, and some constant $C \in \overline{\mathbb{Q}}$. Then for every finite order wild $p$-adic character $\chi$,
\begin{equation*}
L_p (Sym^2 E, \chi, s) = C L_p (Sym^2 E', \chi, s)
\end{equation*}
holds for all $s \in \mathbb{Z}_p$. 
\end{lem}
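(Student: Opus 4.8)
The plan is to repackage the difference $L_p(Sym^2 E,n) - C\,L_p(Sym^2 E',n)$ as the value of a single $p$-adic analytic function at a one-parameter family of wild characters, and then to exploit that an infinite set of integers has an accumulation point in $\mathbb{Z}_p$. Fix an embedding $\overline{\mathbb{Q}}\hookrightarrow\mathbb{C}_p$, so that $C\in\mathbb{C}_p$. Write $\mu_E := \mu_p(\Omega^+(Sym^2 E(2)))$ and $\mu_{E'} := \mu_p(\Omega^+(Sym^2 E'(2)))$, and set $\nu := \mu_E - C\mu_{E'}$. By the results of \cite{Dabro} recalled above, each of $\mu_E,\mu_{E'}$ is an $h$-admissible measure on $G_p\cong\mathbb{Z}_p^\times$ with $h\le 2$ (bounded in the ordinary and multiplicative cases, $2$-admissible in the supersingular case), and since $h$-admissibility is preserved under $\mathbb{C}_p$-linear combinations, $\nu$ is again $h$-admissible with $h\le 2$. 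Hence, by \cite{Visik}, the function $L_\nu\colon\chi\mapsto\int_{\mathbb{Z}_p^\times}\chi\,d\nu$ is analytic of type $o(\log^h)$ on the space $X_0$ of wild $p$-adic characters; under the identification of $X_0$ with the disk $U=\{u\in\mathbb{C}_p^\times:|u-1|<1\}$ by $\chi\mapsto\chi(1+p)$, it is given by a power series converging on $U$. By the defining integrals, $L_\nu(\chi\langle\cdot\rangle^s) = L_p(Sym^2 E,\chi,s) - C\,L_p(Sym^2 E',\chi,s)$ for every wild $p$-adic character $\chi$ and every $s\in\mathbb{Z}_p$; in particular $L_\nu(\langle\cdot\rangle^n) = L_p(Sym^2 E,n) - C\,L_p(Sym^2 E',n) = 0$ for all $n\in Y$.

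Next I would use that $Y$ is infinite. Since $1+p$ is a topological generator of the torsion-free group $1+p\mathbb{Z}_p$, the map $n\mapsto(1+p)^n$ is injective on $\mathbb{Z}$, so $\{(1+p)^n:n\in Y\}$ is an infinite subset of the compact set $1+p\mathbb{Z}_p=\{u:|u-1|\le p^{-1}\}\subset U$; it therefore possesses an accumulation point $(1+p)^{n_0}$, with $n_0\in\mathbb{Z}_p$ a limit point of $Y$, and this point lies strictly inside $U$. A nonzero power series convergent on $U$ cannot vanish at an infinite set of points accumulating at an interior point of $U$: re-expanding around $(1+p)^{n_0}$ and inspecting the lowest nonvanishing coefficient (equivalently, applying $p$-adic Weierstrass preparation on the closed sub-disc $\{|u-1|\le p^{-1}\}$) yields a contradiction. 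Since $L_\nu$ vanishes at the infinitely many distinct points $(1+p)^n$, $n\in Y$, we conclude that $L_\nu\equiv 0$ on $U$. Consequently, for any finite-order wild $p$-adic character $\chi$ and any $s\in\mathbb{Z}_p$, the twist $\chi\langle\cdot\rangle^s$ is again a wild $p$-adic character, hence corresponds to a point of $U$, and $L_\nu(\chi\langle\cdot\rangle^s)=0$; equivalently $L_p(Sym^2 E,\chi,s) = C\,L_p(Sym^2 E',\chi,s)$ for all $s\in\mathbb{Z}_p$, which is the assertion of the lemma.

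The step I expect to be the main obstacle is the $p$-adic function-theoretic input: one must make sure that the $h$-admissibility of $\nu$ genuinely produces a convergent power series representation of $L_\nu$ on the whole open disc $U$ (this is precisely where the theory of \cite{Visik} is invoked — a priori an $h$-admissible distribution is only a measure ``up to $o(\log^h)$''), and that the accumulation of the zeros $(1+p)^n$ takes place at an interior point of $U$ rather than on its boundary, so that the $p$-adic identity theorem applies. Once these two points are secured, the remainder is purely formal.
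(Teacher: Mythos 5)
Your proposal is correct and follows essentially the same route as the paper: both form the difference of the two $p$-adic $L$-functions as a single analytic function of type $o(\log^h)$ on $X_0\cong U$ (via the $h$-admissibility results of \cite{Dabro} and \cite{Visik}), observe that it vanishes at the infinitely many points $(1+p)^n$, $n\in Y$, inside the closed sub-disc $|u-1|\le p^{-1}$, and conclude identical vanishing on $U$ by the $p$-adic zero-counting/uniqueness principle. The only cosmetic difference is that the paper uses $(n,p)=1$ to place the zeros exactly on the circle $|z-1|=p^{-1}$ and cites Section 2.5 of \cite{Visik}, whereas you invoke compactness of $1+p\mathbb{Z}_p$ and Strassmann/Weierstrass preparation, which is an equally valid justification of the same step.
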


\begin{proof}
We follow the approach in \cite{LuoRam}. 
Let 
\begin{equation*}
G(\nu) = L_p (Sym^2 E, \nu) - CL_p(Sym^2 E', \nu)
\end{equation*}
for every $\nu \in X_0$. $G$ vanishes on $X_1 =\{\alpha_n = \langle x \rangle^n| n\in Y\}$ by hypothesis; we want to show that $G$ vanishes on $X_0$. We will use the fact that $G$ is an analytic function on $X_0$ of type $o(\text{log}^h)$ (as in \eqref{Lmu}). $G$ considered as an analytic function on $U$ (see \eqref{padicdisk}) vanishes on the subset 
\begin{equation*}
U_1 =\{ (1+p)^n| n \in Y\}. 
\end{equation*}

We will show that there exists $r = 1/p$ such that the number of zeros $z$ of $G$ such that $|z-1|=r$ is infinite. Indeed, for all $n \in Y$ elements in an infinite set with $n$ relatively prime to $p$ as above, $z_n:= (1+p)^n \in U_1$ is a zero of $G$ and
\begin{equation*}
|z_n -1 | = |(1+p)^n - 1|_p = \Bigl| \sum_{j=1}^n {n \choose j} p^j \Bigr|_p = \frac{1}{p}.
\end{equation*}
By Section 2.5 in \cite{Visik}, $G$ is identically zero on $U$. 
\end{proof}

\begin{proof}[Proof of Theorem 1]
By Lemma 4, for every finite order wild $p$-power character $\chi$, the identity
\begin{equation}
L_p(Sym^2 E, \chi, s) = CL_p (Sym^2 E', \chi, s)
\end{equation}
holds for all $s \in \mathbb{Z}_p$. By equation \eqref{at2},  if $E$ has good reduction at $p$ then
\begin{equation}
\label{ifgood}
\alpha_p(E)^{-2m_\chi} L(Sym^2 E, \chi, 2) = C' \alpha_p(E')^{-2m_\chi} L(Sym^2 E', \chi, 2) 
\end{equation}
for some $C' \in  \overline{\mathbb{Q}}$. 

If $E$ has bad multiplicative reduction at $p$, then by \eqref{at2bad}, 
\begin{equation}
\label{ifbad}
L(Sym^2 E, \chi, 2) = C'' L(Sym^2 E', \chi, 2)
\end{equation}
for some $C'' \in \overline{\mathbb{Q}}$.

Let $\pi, \pi'$ be the isobaric sums of unitary cuspidal automorphic representations over $\text{GL}(3, \mathbb{A}_{\mathbb{Q}})$ associated to $Sym^2 E$ and $Sym^2 E'$ respectively (If E and E' are not CM, then these are just unitary cuspidal representations). Then the unitarized $L$-functions $L_u$ corresponding to $\pi$ and $\pi'$ satisfy $L_u(\pi, s) = L(Sym^2 E, s+1)$. Hence, it follows that if $E$ has semistable reduction at $p$, we have from \eqref{ifgood} and \eqref{ifbad} that
\begin{equation*}
L(\pi \otimes \chi, 1) = C_1 C_2^{m_\chi} L(\pi' \otimes \chi, 1)
\end{equation*}
for all wild $p$-power characters $\chi$ of conductor $p^{m_\chi}$ with $m_\chi$ sufficiently large and by the discussion in Section 5, the twisted $L$-functions are entire. Then by Theorem 2, we conclude that $\pi \cong \pi'$ and thus $\text{Ad}(\eta) \cong \text{Ad}(\eta')$ where $\eta, \eta'$ are the unitary cuspidal automorphic representations of $\text{GL}(2, \mathbb{Q})$ associated to $E$. By Theorem 4.1.2 in \cite{Ram} we conclude that $\eta' = \eta \otimes \nu$ with $\nu$ a quadratic character  since $\omega_\eta = \omega_{\eta'}=1$. Write $\nu(\cdot) = \left(\frac{\cdot}{D}\right)$. It then follows by Faltings' isogeny theorem that $E'$ is isogenous to $E_D$, where for the elliptic curve $E$ given by the equation $y^2=f(x)$ we have that $E_D$ is given by the equation $Dy^2 = f(x)$. Clearly if the conductors of $E$ and $E'$ are square free, then $E\approx E'$.

\end{proof}

\begin{remark}
Suppose $E, E'$ are CM elliptic curves and let $\eta, \eta'$ be their associated idele class characters over the imaginary quadratic number fields $K$ and $K'$ respectively. If we let $\pi, \pi'$ be the representations induced by the characters $\eta, \eta'$, then they are dihedral. By the discussion in Section 5, 
\begin{equation}
L(Sym^2 \pi, s) = L\left( I_{K}^\mathbb{Q} (\eta^2), s \right) L(\eta_0, s)
\end{equation}
where $\eta_0$ denotes the restriction of $\eta$ to $\mathbb{Q}$, and similarly for $\pi'$. If $K=K'$ then $\eta_0 = \eta_0'$. Hence, Theorem 1 for $E, E'$ as above is a consequence of Lemma 4 and Theorem A in \cite{LuoRam}, since  $I_{K}^\mathbb{Q} (\eta^2)$ is a cuspidal automorphic representation of $\text{GL}(2, \mathbb{A}_\mathbb{Q})$. 

It is unclear if for $K\neq K'$ Theorem 1 can be reduced to a consequence of a result on the determination of $\text{GL}(2)$ cusp forms. The special values $L(\eta_0 \otimes \chi, 1)$ and $L(\eta_0 \otimes \chi, 2)$ can be expressed in terms of the generalized Bernoulli numbers $B_{1, \overline{\eta_0 \chi}}$ and $B_{2, \overline{\eta_0 \chi}}$ respectively, but there is no clear way to separate the contributions from $\eta_0$ and $\chi$ in $L(\eta_0 \otimes \chi, 1)$. 

\end{remark}

\end{document}